\documentclass[12pt]{amsart}
\usepackage[margin=1.25in]{geometry}
\usepackage{amsmath,amssymb,amsthm,amscd}
\usepackage{mathrsfs}
\usepackage{mathtools}
\usepackage{enumerate}
\usepackage[T1]{fontenc}
\usepackage{lmodern}
\usepackage{microtype}
\usepackage{hyperref}
\hypersetup{colorlinks=true,linkcolor=blue,citecolor=blue,urlcolor=blue}

\def\ge{\geqslant}
\def\le{\leqslant}
\def\phi{\varphi}
\def\epsilon{\varepsilon}
\def\tilde{\widetilde}

\def\mapsto{\longmapsto}

\DeclareMathOperator{\Ext}{Ext}

\DeclareMathOperator{\Spec}{Spec}

\DeclareMathOperator{\ord}{ord}

\newcommand{\m}{\mathfrak m}
\newcommand{\sO}{\mathcal O}

\newcommand{\reg}{\operatorname{reg}}

\theoremstyle{plain}
\newtheorem{thm}{Theorem}[section]
\newtheorem{cor}[thm]{Corollary}
\newtheorem{prop}[thm]{Proposition}
\newtheorem{lem}[thm]{Lemma}
\newtheorem{theo}{Theorem}

\newtheorem{COR}[theo]{Corollary}
\theoremstyle{definition}
\newtheorem{defn}[thm]{Definition}

\newtheorem{notation}[thm]{Notation}
\theoremstyle{remark}
\newtheorem{rem}[thm]{Remark}
\makeatletter

\@addtoreset{equation}{thm}
\makeatother

\title[Frobenius liftability of complete intersections]
{Frobenius liftability of isolated complete intersections}
\author{Tatsuro Kawakami}
\address{Graduate School of Mathematical Sciences, University of Tokyo,
3-8-1 Komaba, Meguro-ku, Tokyo 153-8914, Japan}
\email{kawakami@ms.u-tokyo.ac.jp}
\author{Teppei Takamatsu}
\address{Department of Mathematics, Faculty of Science, Saitama University,
255 Shimo-Okubo, Sakura-ku, Saitama-shi, Saitama 338-8570, Japan}
\email{teppeitakamatsu.math@gmail.com}
\keywords{Frobenius liftability; isolated singularities; complete intersections;
Cartier operator}
\subjclass[2020]{13A35, 14B05, 13H10}
\date{}
\begin{document}
\begin{abstract}
We classify Frobenius liftable normal isolated complete intersection
singularities of dimension at least three over an algebraically closed
field of positive characteristic. In dimension three, they are precisely
the regular points and the ordinary double points; in higher dimensions,
they are regular. 
%The proof applies in every positive characteristic,
%including characteristic two, and does not use a classification of
%three-dimensional singularities.
We also characterize Frobenius liftability of the complete local
hypersurfaces defined by $xy-g(z,t)$ without assuming that the singularity
is isolated. Such a hypersurface is Frobenius liftable if and only if,
after a formal change of coordinates, $g$ is a unit times
$H^pz^at^b$ with $0\le a,b<p$.
\end{abstract}
\maketitle
\markboth{Tatsuro Kawakami and Teppei Takamatsu}
{Frobenius liftability of complete intersections}
\setcounter{tocdepth}{1}
%\tableofcontents

\section{Introduction}\label{sec:introduction}
We work over an algebraically closed field $k$ of characteristic $p>0$,
unless stated otherwise.
A complete local $k$-algebra $R$ is said to be \textit{$F$-liftable} if
there exists a flat lifting of $R$ over the ring $W_2(k)$ of Witt vectors
of length two together with a lifting of its Frobenius endomorphism.
The lifting of the ring is allowed to vary, and the lifted
Frobenius acts on $W_2(k)$ as its Frobenius morphism.

Frobenius liftability imposes strong restrictions on singularities.
In particular, a lifting of Frobenius gives split injections from
differential forms into their Frobenius pushforwards
\cite{BTLM}.
For singular spaces, these maps extend to reflexive differentials.
Zdanowicz \cite{Zda18} developed computational criteria for local
Frobenius liftability and proved, in odd characteristic, the
non-liftability of ordinary double points of dimension at least four.
For surface singularities, the relationship between $F$-liftability and
$F$-purity is substantially less restrictive; we refer to \cite{KT,KT2}
for the classification in the surface case.

Our first result gives a classification for isolated complete
intersections in dimension at least three.

\begin{theo}\label{thm:main-ici}
Let $(R,\m)$ be a normal complete local complete intersection
$k$-algebra with residue field $k$ and dimension $d\ge3$.
Suppose that $\Spec R\setminus\{\m\}$ is regular.
Then the following are equivalent:
\begin{enumerate}[(1)]
\item $R$ is $F$-liftable.
\item Either $R$ is regular, or $d=3$ and
\[
R\cong k[[x,y,z,t]]/(xy-zt).
\]
\end{enumerate}
\end{theo}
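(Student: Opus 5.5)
We prove (2)$\Rightarrow$(1) directly and (1)$\Rightarrow$(2) via reflexive differentials and the Cartier operator.

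\emph{(2)$\Rightarrow$(1).} If $R$ is regular, then $R\cong k[[x_1,\dots,x_d]]$. It lifts to $W_2(k)[[x_1,\dots,x_d]]$ with the Frobenius lift $x_i\mapsto x_i^p$. For the ordinary double point we take the lift
\[
\tilde R=W_2(k)[[x,y,z,t]]/(xy-zt),
\]
with the lift $x\mapsto x^p$, $y\mapsto y^p$, $z\mapsto z^p$, $t\mapsto t^p$ (Witt Frobenius on coefficients). This is well defined because $xy-zt\mapsto x^py^p-z^pt^p$, which lies in the ideal $(xy-zt)$. Alternatively, this is the toric cone over $\mathbb P^1\times\mathbb P^1$, and toric singularities are $F$-liftable.

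\emph{(1)$\Rightarrow$(2).} Suppose $R$ is $F$-liftable and singular. Put $U=\Spec R\setminus\{\m\}$, which is regular. A Frobenius lift gives, for every $i$, a split injection $\Omega^{[i]}_R\to F_*\Omega^{[i]}_R$ compatible with the inverse Cartier operator; on $U$ this is the usual splitting of $0\to B^i\to Z^i\to\Omega^i\to 0$ arising from the lift. We also get the lift of $R$ itself over $W_2(k)$, which is a deformation. We exploit this at two levels.

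\emph{Step 1: depth of reflexive differentials.} For an isolated complete intersection of dimension $d$, the conormal sequence $0\to I/I^2\to\Omega_P\otimes R\to\Omega_R\to 0$ gives Koszul-type resolutions. By the standard Greuel/Vetter-type computation, $\Omega^i_R$ is torsion-free and reflexive for $i<d-1$ (depth computations), so $\Omega^{[i]}_R=\Omega^i_R$ for small $i$, and $H^j_\m(\Omega^{[i]})$ vanishes in a controlled range. The Frobenius splitting then forces, via the Cartier sequence on $U$, that $H^1(U,B^i_U)$ injects into things killed by Frobenius, and iterating Frobenius (the splittings compose) forces local cohomology groups like $H^2_\m(\Omega^{[1]}_R)=H^1(U,\Omega^1_U)$ to be Frobenius-split injected into $H^1(U,F^e_*\Omega^1)$. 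This yields a vanishing statement: the splitting shows these groups are ``$F$-injective'' in a sense that contradicts their being finite length and nonzero of degree beyond a bound, unless a specific numerology holds.

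\emph{Step 2: pin down the embedding dimension and dimension.} $F$-liftable implies $F$-split, and $F$-split complete intersections have multiplicity at most that of a quadric relevant quantity. Explicitly, $F$-purity via Fedder's criterion for $f_1,\dots,f_c$ requires $(f_1\cdots f_c)^{p-1}\notin\m^{[p]}$, which forces the initial forms to have total degree $\le$ number of variables. Combined with Step 1, we aim to show $c=1$ and $f$ has nondegenerate quadratic part with rank equal to $d+1$. Then $f$ is an ordinary double point after a formal coordinate change (Morse lemma, including the $p=2$ case handled by the normal form of nondegenerate quadrics). Finally we apply Zdanowicz-type non-liftability for ordinary double points of dimension $\ge4$, reproven uniformly in all $p$ by computing the obstruction: the lifted Frobenius must send $f$ to $f^p+p\,g\cdot f$. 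Comparing degree-$p$ parts and using $H^1(U,\Omega^1_U)\ne0$ only in dimension three (the class of the quadric $\mathbb P^1\times\mathbb P^1$ versus higher quadrics) excludes $d\ge4$.

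\emph{Main obstacle.} We expect the hard step to be showing that the singularity must be a hypersurface with nondegenerate quadratic term, i.e.\ excluding higher multiplicity or degenerate ones. For this we would first try a weight/degree argument on the equation $\phi(\tilde f)\in(\tilde f)$ modulo $p$ for $\tilde\phi(x_i)=x_i^p+p\delta_i$. This gives $f^p/p$-type derivation identities $\sum \partial_i f^{p}\cdots$, namely
\[
\Delta(f)+\sum_i f_{x_i}^p\delta_i\in(f),
\]
where $\Delta(f)$ is the Witt correction term. We would analyse the lowest-order terms, with the isolatedness of the singularity guaranteeing the partials form a regular sequence off one point.
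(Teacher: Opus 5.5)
There is a genuine gap in (1)$\Rightarrow$(2); your proof of (2)$\Rightarrow$(1) is fine and is the paper's. In (1)$\Rightarrow$(2) the decisive steps are only announced. Step~1 never reaches a concrete conclusion. The tool you actually use in Step~2, Fedder's criterion, is far too weak. For $S/(f_1,\dots,f_c)$ it only says that $(f_1\cdots f_c)^{p-1}$ has a monomial outside $\m^{[p]}$, hence of degree at most $n(p-1)$. This gives $2c\le\sum_i\operatorname{ord}(f_i)\le n=d+c$, i.e.\ merely $c\le d$; cones over smooth intersections of two quadrics in $\mathbb P^5$ are $F$-pure isolated complete intersections with $c=2$ for $p\gg0$. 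It also says nothing about nondegeneracy of the quadratic part. For example, $f=xy+z^2+t^3$ is a normal isolated singularity with degenerate quadratic part, yet $f^{p-1}$ contains $x^{p-1}y^{p-1}$ with coefficient $1$, so it is $F$-pure. The ``main obstacle'' you name is therefore exactly the step left unproved. The identity $\Delta(f)+\sum_i f_{x_i}^p\delta_i\in(f)$ you propose for it is a hypersurface criterion, so it presupposes $c=1$. You also give no mechanism for extracting nondegeneracy from its lowest-order terms.

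The missing idea is to turn the splitting into a statement about $T^1_R=\operatorname{Coker}(J\colon R^n\to R^c)$. From $0\to R^c\xrightarrow{J^{\mathsf t}}R^n\to\Omega^1_R\to0$, the module $\Omega^1_R$ is reflexive and $H^j_\m(\Omega^1_R)=0$ for $j\le d-2$. So the group $H^2_\m(\Omega^{[1]}_R)$ you single out vanishes once $d\ge4$. The informative group is $N=H^{d-1}_\m(\Omega^1_R)$, whose Matlis dual is $\Ext^1_R(\Omega^1_R,R)\simeq T^1_R$; it has finite length by isolatedness. The split injection $\Omega^1_R\to F_*\Omega^1_R$ induces an injective $p$-semilinear self-map of $N$. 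Iterating until $p^e$ exceeds the Loewy length gives $\m N=0$ (not $N=0$). Dually, $\operatorname{Im}J=\m R^c$. Since $\operatorname{Im}J$ has $n$ generators and $\m R^c$ needs $cn$, you get $c\le1$. For $c=1$, the equality $J_f=\m_R$ forces the polar form of the quadratic part to be nondegenerate, and the formal Morse lemma then makes $f$ a quadric.

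Your exclusion of $d\ge4$ via ``$H^1(U,\Omega^1_U)\neq0$ only in dimension three'' is not an argument. Every nondegenerate quadric has $T^1_R\simeq k$, so $\m N=0$ holds in all dimensions and the local-cohomology method cannot distinguish $d=3$ from $d\ge4$. An explicit obstruction is needed. For odd $p$, reduce to $q_0=xy+zw+t^2$ in $T=k[x,y,z,w,t]/(x^p,y^p,z^p,w^p,t^p)$ and show $\Delta_1(q_0)\notin q_0T$. Multiply by $s=q_0^{p-1}t^{p-3}$, which kills $q_0$ but not $\Delta_1(q_0)$, since the relevant coefficient is $\binom{2p-1}{p-1}\equiv1\pmod p$. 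For $p=2$, use $q_0=xy+zw+uv$ with $q_0^2=0$ and $q_0\Delta_1(q_0)=xyzwuv\neq0$.
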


Thus, a singular isolated three-dimensional hypersurface is
$F$-liftable precisely when it is an ordinary double point.
In particular, isolated compound Du Val singularities other than the
ordinary double point are excluded, including those of types
$cD$ and $cE$.
%The theorem has no restriction on the characteristic.

In characteristic zero, Kawakami and Witaszek
\cite[Theorem B and Theorem 4.2]{KW} obtained a related classification
for isolated rational hypersurface singularities whose reductions are
$F$-liftable over a Zariski-dense set of closed points of a model.
Theorem \ref{thm:main-ici} concerns a single positive characteristic
and also allows complete intersections of arbitrary embedding codimension.

Since three-dimensional Gorenstein terminal singularities are isolated complete intersection singularities by \cite{Tanaka25},
we obtain the following corollary:

\begin{COR}
Let $(R,\mathfrak m)$ be the completed local ring at a closed point
of a three-dimensional Gorenstein terminal variety over $k$.
Then the following are equivalent:
\begin{enumerate}[(1)]
\item $R$ is $F$-liftable.
\item Either $R$ is regular or 
\[
R\cong k[[x,y,z,t]]/(xy-zt).
\]
\end{enumerate}
\end{COR}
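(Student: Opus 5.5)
The Corollary follows by combining Theorem~\ref{thm:main-ici} with the structure theory of Gorenstein terminal threefolds. Its statement concerns the completed local ring $R$ at a closed point of a three-dimensional Gorenstein terminal variety $X$ over $k$. I will check that every such $R$ satisfies the hypotheses of Theorem~\ref{thm:main-ici} with $d=3$, and then read off condition (2).

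First, the hypotheses. By \cite{Tanaka25}, a Gorenstein terminal threefold singularity is an isolated complete intersection singularity. Hence $\widehat{\sO}_{X,x}=R$ is a complete local complete intersection $k$-algebra of dimension $3$. Its residue field is $k$, since $x$ is a closed point and $k$ is algebraically closed. Two properties remain.
\begin{enumerate}[(a)]
\item Normality: terminal varieties are normal by definition, and normality of an excellent local ring passes to its completion.
\item Regularity of the punctured spectrum: terminal singularities in dimension $3$ are isolated, so $X$ is regular in a punctured neighborhood of $x$. For the completion we need $\Spec R\setminus\{\m\}$ to be regular, not merely the punctured spectrum of $\sO_{X,x}$. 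Since $\sO_{X,x}\to R$ is flat with regular (indeed geometrically regular, by excellence) fibers, every prime of $R$ other than $\m$ lies over a nonmaximal prime of $\sO_{X,x}$, where the local ring is regular. Therefore the localization of $R$ at that prime is regular.
\end{enumerate}
Alternatively, (b) may be part of the cited statement itself (``isolated complete intersection singularity'' is usually formulated for the completion). In that case this step only requires quoting \cite{Tanaka25} precisely.

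With the hypotheses verified, Theorem~\ref{thm:main-ici} applies with $d=3$. It gives that $R$ is $F$-liftable if and only if $R$ is regular or $R\cong k[[x,y,z,t]]/(xy-zt)$, which is exactly the claimed equivalence.

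One consistency point remains, though it is not logically needed for the equivalence. The ordinary double point is genuinely terminal and Gorenstein, so the second alternative actually occurs among these singularities.

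The only real obstacle is making sure the cited result of \cite{Tanaka25} delivers the complete intersection property in every characteristic, including $p=2,3$. It must also apply to the completed local ring, not just to a hypersurface presentation after some base change. If that reference gives only ``hypersurface of embedding dimension $4$'', that suffices, since a hypersurface is a complete intersection. The rest is the standard descent of normality and regularity along completion.
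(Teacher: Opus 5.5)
Your proposal is correct and follows the paper's route. The paper likewise obtains the Corollary directly from Theorem~\ref{thm:main-ici}, citing \cite{Tanaka25} for the fact that three-dimensional Gorenstein terminal singularities are isolated complete intersections; your checks that normality and regularity of the punctured spectrum pass to the completion via excellence fill in details the paper leaves implicit.
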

Three-dimensional Gorenstein terminal singularities are strongly $F$-regular, and hence in particular $F$-pure, when $p>5$ (Proposition~\ref{prop:terminal-cdv-fregular}). It is worth emphasizing the sharp contrast with the two-dimensional case: while $F$-purity and $F$-liftability are nearly indistinguishable in dimension two, $F$-liftable singularities form a much more restrictive class in higher dimensions.

The main point of the proof of Theorem \ref{thm:main-ici} is a restriction on the first cotangent
cohomology module.
For an isolated complete intersection of dimension $d\ge3$, the
module $\Omega_R^1$ is reflexive and admits a two-term free resolution.
Frobenius liftability therefore gives a split injection
\[
H^{d-1}_{\m}(\Omega_R^1)\longrightarrow
F_*H^{d-1}_{\m}(\Omega_R^1).
\]
This local cohomology module has finite length.
An elementary observation about injective Frobenius-semilinear maps
then shows that it is annihilated by $\m$.
By local duality, the same holds for
$\Ext_R^1(\Omega_R^1,R)$.
A count of minimal generators forces the embedding codimension to
be at most one.
In the hypersurface case, the Jacobian ideal is consequently the maximal
ideal. We deduce that the equation is formally equivalent to a quadratic
form with nondegenerate polar form.
An explicit obstruction calculation then excludes the higher-dimensional
quadrics.

Our second result treats equations of the form $xy-g(z,t)$ without an
isolatedness assumption.
Here and below, the phrase \textit{of $cA$ type} refers to this
form of the equation (cf.\ Remark~\ref{rem:cA}).

\begin{samepage}
\begin{theo}\label{thm:main-ca}
Let $B=k[[z,t]]$, let $0\ne g\in(z,t)$, and set
\[
R_g=B[[x,y]]/(xy-g).
\]
Then the following are equivalent:
\begin{enumerate}[(1)]
\item $R_g$ is $F$-liftable.
\item After a formal change of coordinates in $B$, one can write
\[
g=uH^pz^at^b,\qquad
u\in B^\times,\quad 0\ne H\in B,\quad 0\le a,b<p.
\]
\item The reduced curve formed by the irreducible factors of $g$
whose multiplicities are not divisible by $p$ is empty, smooth,
or the union of two smooth branches meeting transversely.
\end{enumerate}
\end{theo}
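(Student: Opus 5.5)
We will prove (2)$\Rightarrow$(1), then (1)$\Rightarrow$(3), and finally (3)$\Leftrightarrow$(2).

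\textbf{(3)$\Leftrightarrow$(2).} This is a statement about plane curve germs in the UFD $B$. Factor
\[
g=u\prod f_i^{e_i}, \qquad e_i=pq_i+r_i,\quad 0\le r_i<p.
\]
Set $H=\prod f_i^{q_i}$; the residual part is $\prod f_i^{r_i}$. The reduced curve in (3) is cut out by those $f_i$ with $r_i\neq0$. If this curve is empty, smooth, or a transverse node, we choose coordinates in which its branches are $z$ and/or $t$. Conversely, given (2), $z$ and $t$ are the only possible factors with exponent not divisible by $p$. So (2) and (3) are equivalent by unique factorization.

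\textbf{(2)$\Rightarrow$(1).} We construct an explicit lift. Take $\tilde B=W_2(k)[[z,t]]$ with the Frobenius lift $\phi(z)=z^p$, $\phi(t)=t^p$, extended semilinearly. Lift $u$ and $H$ arbitrarily to $\tilde u,\tilde H$, put $\tilde g=\tilde u\tilde H^pz^at^b$, and set
\[
\tilde R=\tilde B[[x,y]]/(xy-\tilde g),
\]
which is flat over $W_2(k)$. We need $\phi(x)=x^p+p\alpha$ and $\phi(y)=y^p+p\beta$ with
\[
(x^p+p\alpha)(y^p+p\beta)=\phi(\tilde g)\quad\text{in }\tilde R.
\]
Since $p^2=0$, this reduces to finding $\alpha,\beta\in R_g$ with
\[
x^p\beta+y^p\alpha=\delta, \qquad \delta:=\frac{\phi(\tilde g)-\tilde g^p}{p}\bmod p.
\]
Next we compute $\delta$. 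Since $\phi(z^at^b)=(z^at^b)^p$ exactly, we get $\delta=\tilde H^{p^2}z^{ap}t^{bp}\,\delta_u+(\text{term from }H)$. Here $\delta(H^p)$ is divisible by $H^{p(p-1)}$, because $\phi(\tilde H^p)-\tilde H^{p^2}$ is $p$ times a multiple of $\tilde H^{p(p-1)}$ after expanding $(\tilde H^p+p c)^p$-type expressions. So $\delta$ lies in the ideal $(g^{p-1})$; more precisely it is a multiple of $(H^pz^at^b)^{p-1}\cdot$(stuff). Now $g^{p-1}=x^{p-1}y^{p-1}$ in $R_g$, but we need membership in $(x^p,y^p)$, which is a stronger condition. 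The key point is that $\delta$ should land in
\[
(x^p,y^p)\cap B=(g^p)+\cdots,
\]
and we compute $(x^p,y^p)\cap B$. Since $R_g$ is free over $B$ with basis $x^i,y^j$, we have
\[
(x^p,y^p)\cap B=\bigl(g^{\,p}\bigr)B+\text{(contributions }x^iy^j=g^{\min})\text{, giving }(g^p).
\]
The honest approach is to choose the lift cleverly so that $\delta$ lands there. Alternatively, and more cleanly, we use that $F$-liftability is preserved by cyclic covers and quotients: $R_g$ is the $\mu_n$-invariant-type toric construction when $g=z^at^b$ (a toric ring, hence $F$-liftable via monomial Frobenius $x\mapsto x^p$). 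The unit $u$ is absorbed by rescaling $x$. The factor $H^p$ is handled by the substitution $x=H x'$ in the localized or blown-up sense; I expect to instead use the paper's earlier criterion for $F$-liftability of hypersurfaces (a Zdanowicz-type criterion: $f$ liftable iff $\delta(f)\in (f^p$-Frobenius ideal$)+$Jacobian-type ideal). With that, we would verify membership of $\delta$ directly using $H^p$ as a $p$-th power, whose obstruction vanishes up to the Jacobian.

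\textbf{(1)$\Rightarrow$(3).} Assume $R_g$ is $F$-liftable. Localize at a height-one prime of $B$ corresponding to a branch $f$ with $r\neq0$, and at codimension-two points of the singular locus; $F$-liftability localizes. At a general point of a branch $f$ with $p\nmid e$, the local ring is $xy-f^e$ over a smooth base with one extra variable, so it is a $cA_{e-1}$ family. If this branch is itself singular at the origin, or if three or more such branches pass through the origin, we derive a contradiction. The plan is to reduce to the surface case: cut by a general hyperplane $\ell\in B$, obtaining the surface $xy-g|_{\ell=0}$, an $A_{m-1}$-type singularity. However, $F$-liftability need not pass to hyperplane sections. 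Instead we use the split injection $\Omega^{[1]}\to F_*\Omega^{[1]}$ from the introduction, applied to the reflexive module $\Omega^{[1]}_{R_g}$. Via the $\mathbb{G}_m$-grading (weights $x\mapsto1$, $y\mapsto-1$), we reduce this to a statement on $B$: the logarithmic-type module $\Omega_B(\log D)$, with $D$ the reduced curve of (3), must be Frobenius split compatibly. This forces $(B,D)$ to be $F$-liftable as a pair, and it is known that for curve germs this means $D$ is snc. This step is the main obstacle. Making the passage from the lift of $R_g$ to a lift of $(B,D)$ precise, with the weight-zero part of a $\mu$- or $\mathbb{G}_m$-equivariant lift (obtained by averaging or by the splitting), is where I expect the real work to lie.
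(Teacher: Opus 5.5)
Your argument for (2)$\Leftrightarrow$(3) by unique factorization is fine and matches the paper's. The other two directions are not established.

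\textbf{The direction (2)$\Rightarrow$(1).} Your reduction is correct. With $\phi(x)=x^p+p\alpha$ and $\phi(y)=y^p+p\beta$ you need $x^p\beta+y^p\alpha=\delta$ in $R_g$. Since $\delta\in B$, this amounts to $\delta\in(x^p,y^p)\cap B=g^pB$. But you then under-estimate $\delta$ and abandon the computation. The missing point is that $(U+pV)^p=U^p$ whenever $p^2=0$. As $\phi(\widetilde H)=\widetilde H^p+pA$, this gives $\phi(\widetilde H^p)=\widetilde H^{p^2}$ exactly. So $H^p$ contributes nothing to $\delta$, not merely something divisible by $H^{p(p-1)}$. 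Put $\delta_u=(\phi(\widetilde u)-\widetilde u^{\,p})/p \bmod p$. Then $\delta=\delta_u\,(H^pz^at^b)^p=\delta_u u^{-p}x^py^p$, and $\alpha=0$, $\beta=\delta_u u^{-p}y^p$ works. The paper does the same thing more directly: absorb $u$ into $x$, take $G=\widetilde H^pz^at^b$, and note $\Phi_0(G)=G^p$, so $\Phi_0(xy-G)=(xy)^p-G^p\in(xy-G)$. Your toric and ``$x=Hx'$'' alternatives are not arguments as stated.

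\textbf{The direction (1)$\Rightarrow$(3).} This is the substance of the theorem, and you give a plan whose key step is unjustified. A split injection $\Omega^{[1]}\to F_*\Omega^{[1]}$ is only a consequence of $F$-liftability and does not by itself produce a Frobenius lifting. So even after transporting it to $\Omega_B(\log D)$, which you do not do, it would not ``force $(B,D)$ to be $F$-liftable as a pair.'' The classification of such pairs that you then invoke is also not supplied.

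The paper instead applies the grading $\deg x=1$, $\deg y=-1$ to the lifting obstruction rather than to differentials. Combined with Zdanowicz's criterion (Lemma~\ref{lem:deltaone-criterion}), this shows $R_g$ is $F$-liftable iff there are a Frobenius lift $\Phi$ of $\widetilde B$ and a lift $G'$ of $g$ with $\Phi(G')=(G')^p(1+p\widetilde c)$ (Proposition~\ref{prop:deltaone-ca-criterion}). This is compatibility with the non-reduced divisor of $g$, not with $D_g$, and the passage to $D_g$ is where the work lies:
\begin{enumerate}[(a)]
\item The divided differential $\zeta=d\Phi/p$ satisfies $\zeta(dg)=g^{p-1}dg+g^p\,dc$.
\item Hence for each branch $h$ of multiplicity prime to $p$, one gets $\zeta(dh)\equiv h^{p-1}dh \pmod{h^p}$ in $B_{(h)}$.
\item Since $dh$ is part of a basis there (Lemma~\ref{ca:primitive}), $D_g^{p-1}$ divides the determinant $\det M$ of $\zeta$.
\item The Cartier operator forces $\operatorname{Tr}_B(\det M)=1$, so $D_g^{p-1}\notin(z^p,t^p)$.
\item A tangent-cone computation together with the formal Morse lemma then shows $V(D_g)$ is empty, smooth, or a node.
\end{enumerate}
Your localization at height-one primes carries no information about the singularity of $D_g$ at the origin.
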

\end{samepage}

The factor $H^p$ is essential in this statement.
For example, $xy-H^p$ is $F$-liftable for every nonzero
$H\in(z,t)$, regardless of the singularities of the curve $H=0$.
In contrast, when $g$ is reduced, the only possibilities are a smooth
curve and an ordinary node.
For $g\in(z,t)^2$ with $R_g$ an isolated singularity,
Theorem \ref{thm:main-ca} thus recovers the ordinary double point.

% To prove Theorem \ref{thm:main-ca}, we first reduce the hypersurface
% criterion to a condition in the two-variable power series ring $B$.
% This condition produces a Frobenius lifting on a lifting of $B$ which
% preserves a lifting of $g$ up to a prescribed unit.
% Its divided differential has determinant divisible by $D^{p-1}$,
% where $D$ is the reduced product of the factors with multiplicity
% prime to $p$.
% The Cartier identity for this determinant implies
% $D^{p-1}\notin(z^p,t^p)$.
% A direct calculation with the tangent cone shows that $D=0$ is
% smooth or nodal, unless it is empty.
% The converse follows from an explicit Frobenius lifting.

\subsection*{Notation and conventions}
Throughout this paper, $k$ is an algebraically closed field of characteristic
$p>0$.
All rings are Noetherian.
A complete local $k$-algebra is understood to have residue field $k$
and finite embedding dimension.
We write $\m_R$ for its maximal ideal and $F_*M$ for the abelian
group $M$ endowed with the $R$-module structure
$a\cdot F_*m=F_*(a^pm)$.
The notation $\Omega_R^1$ denotes differentials over $k$.
For power series presentations, these can be computed by continuous
differentials; the two descriptions agree in our setting, as explained
below.
For $U=(\Spec R)_{\reg}$ and $j\colon U\hookrightarrow\Spec R$,
we write $\Omega_R^{[i]}=j_*\Omega_U^i$, identifying coherent sheaves
on an affine scheme with their modules.
An isomorphism of complete local rings and a change of coordinates
always mean a continuous $k$-isomorphism.

\section{Differentials and Frobenius splittings}\label{sec:preliminaries}

We write $W_2(k)$ for the ring of Witt vectors of length two and
$\sigma\colon W_2(k)\to W_2(k)$ for its Frobenius automorphism.

\begin{defn}\label{pre:definition}
A complete local $k$-algebra $R$ is \emph{Frobenius liftable}, or
\emph{$F$-liftable}, if there exist a flat complete local $W_2(k)$-algebra
$\widetilde R$, an identification $\widetilde R/p\widetilde R\simeq R$,
and a continuous homomorphism
$\widetilde F\colon\widetilde R\to\widetilde R$ such that
$\widetilde F|_{W_2(k)}=\sigma$ and the reduction of $\widetilde F$ modulo
$p$ is the absolute Frobenius of $R$.
\end{defn}

\subsection{Differentials of complete local rings}

For a complete local $k$-algebra $R$, we denote its module
\[
\widehat\Omega^1_{R}\coloneqq \varprojlim_n \Omega_R^1/\m^n \Omega_R^1
\]
of continuous
$k$-differentials by $\Omega_R^1$. In the present setting this agrees
with the ordinary module of K\"ahler differentials. Indeed, if
$S=k[[x_1,\ldots,x_n]]$, then every element $a\in S$ has a unique
expansion
\[
 a=\sum_{0\leq I_1,\ldots,I_n<p}x^I a_I^p,
 \qquad a_I\in S,
\]
because $k$ is perfect. Every $k$-derivation therefore satisfies
\[
 da=\sum_I a_I^p\,d(x^I)
    =\sum_{j=1}^n\frac{\partial a}{\partial x_j}\,dx_j.
\]
Consequently, $\Omega^1_{S/k}=\bigoplus_{j=1}^n S\,dx_j$.
Passing to a quotient $R=S/I$ gives
\begin{equation}\label{pre:conormal}
 I/I^2\longrightarrow\bigoplus_{j=1}^n R\,dx_j
 \longrightarrow\Omega_R^1\longrightarrow0,
 \qquad f\longmapsto\sum_j\frac{\partial f}{\partial x_j}\,dx_j.
\end{equation}
This is also the presentation of continuous differentials.

We use continuous relative differentials for liftings over $W_2(k)$.
Their reduction modulo $p$ is the module just described. If $R$ is
normal of dimension $d$ and $U\subseteq\Spec R$ is its regular locus,
the formal Jacobian criterion shows that $\Omega_U^1$ is locally free
of rank $d$. The same criterion, applied to a flat lifting and an
invertible Jacobian minor, shows that continuous differentials on the
corresponding open subset $\widetilde U$ of the lifting are locally free
of rank $d$. 
% In particular they are flat over $W_2(k)$.
% Here the rank is the dimension of the original complete local ring,
% not the dimension of a localization at an individual point.

% The usual Cartier isomorphism applies on $U$. One way to see this
% without a finite-type assumption is to use local finite $p$-bases:
% the formal Jacobian criterion provides $p$-bases on the regular locus,
% and the standard calculation of the de Rham complex in a $p$-basis
% is unchanged. Thus, writing
% \[
%  Z\Omega_U^i:=\ker\bigl(d\colon F_*\Omega_U^i
%                   \longrightarrow F_*\Omega_U^{i+1}\bigr),
% \]
% we have the Cartier operator
% $C\colon Z\Omega_U^i\to\Omega_U^i$.
Since $k$ is perfect, differentials over $k$ agree with
absolute differentials over $\mathbb F_p$.
The Cartier isomorphism for regular Noetherian
$\mathbb F_p$-algebras \cite[Theorem~9.5.6]{BLM}
therefore applies on $U$.
We use its multiplicativity and the following identities,
where $a$ is a local function and $\eta$ is a closed form:
\[
 C(da)=0,\qquad C(a^{p-1}da)=da,\qquad C(a^p\eta)=aC(\eta).
\]
% Here and below, $F_*M$ denotes the underlying additive group of an
% $R$-module $M$ endowed with the scalar action
% $a\cdot F_*m=F_*(a^pm)$.

\subsection{A finite-length lemma}

The following elementary observation is the finite-length case of the
radical-annihilator property for modules with injective Frobenius
\cite[Proposition~3.5(a)]{EH08}.

\begin{lem}\label{pre:finite-length}
Let $(R,\m)$ be a local ring of characteristic $p>0$, and let $N$ be
an $R$-module of finite length. Suppose that an injective additive map
$\varphi\colon N\to N$ satisfies
\[
 \varphi(au)=a^p\varphi(u)
 \qquad\text{for all }a\in R\text{ and }u\in N.
\]
Then $\m N=0$.
\end{lem}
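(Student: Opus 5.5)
We argue by induction on the length of $N$, using the $\m$-adic filtration together with the observation that $\varphi$ must be bijective. Because $N$ has finite length, it is annihilated by some power $\m^n$. For $a\in\m$ and $u\in N$ we have $\varphi^e(au)=a^{p^e}\varphi^e(u)$. Once $p^e\ge n$, this equals $0$, since $a^{p^e}\in\m^n$. The iterate $\varphi^e$ is injective, being a composition of injective maps, so $au=0$. Hence $\m N=0$.

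The main step is checking that the semilinearity identity iterates. We apply it inductively: $\varphi^{e}(au)=\varphi^{e-1}(a^p\varphi(u))=\cdots=a^{p^e}\varphi^e(u)$. Each stage uses only the hypothesis with the scalar $a^{p^{i}}\in R$. The finite-length assumption enters only through the existence of $n$ with $\m^nN=0$. This holds because a finite-length module is killed by a power of $\m$: its composition factors are all $R/\m$, so if the length is $\ell$ then $\m^\ell N=0$.

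No real obstacle arises. The only subtlety is that we never need $\varphi$ to be surjective or $R$-linear. Injectivity of the iterate suffices, and that follows formally from injectivity of $\varphi$. So the entire proof is this single-paragraph argument.
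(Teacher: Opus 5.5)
Your argument is correct and is essentially the paper's proof: choose $n$ with $\m^nN=0$ and $e$ with $p^e\ge n$, then $\varphi^e(au)=a^{p^e}\varphi^e(u)=0$ because $a^{p^e}\in\m^n$, and injectivity of $\varphi^e$ gives $au=0$. The only blemish is your opening sentence, which announces an induction on length and a bijectivity of $\varphi$ that the argument never uses (and which your later remark about surjectivity contradicts), so you should simply delete it.
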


\begin{proof}
Choose an integer $q\geq1$ such that $\m^qN=0$, and then choose $e\geq1$
with $p^e\geq q$. For $a\in\m$ and $u\in N$, semilinearity gives
\[
 \varphi^e(au)=a^{p^e}\varphi^e(u)=0.
\]
The iterate $\varphi^e$ is injective, so $au=0$. This holds for every
$a\in\m$ and $u\in N$.
\end{proof}

\subsection{Splitting reflexive one-forms}

% Let $R$ be a normal complete local $k$-algebra, let $X=\Spec R$, and let
% $j\colon U\hookrightarrow X$ be the inclusion of the regular locus.
% We write $\Omega_R^{[i]}$ for the finite reflexive module corresponding
% to $j_*\Omega_U^i$.
% The splitting below is the local form of the construction in
% \cite[Theorem 2 and Section 2.3]{BTLM}. 

\begin{prop}[{\cite[Theorem 2 and Section 2.3]{BTLM}}]\label{pre:splitting}
Let $R$ be a normal complete local $k$-algebra of dimension $d\geq1$.
If $R$ is $F$-liftable, then there is a split $R$-linear injection
\[
 \Omega_R^{[1]}\longrightarrow F_*\Omega_R^{[1]}.
\]
\end{prop}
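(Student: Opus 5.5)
The plan is to reproduce, in the complete local setting, the construction of \cite{BTLM}: a Frobenius lifting produces a divided differential $\tfrac{1}{p}d\widetilde F$ on the regular locus. We then extend it by reflexivity and check that it is split by the Cartier operator. Concretely, fix $(\widetilde R,\widetilde F)$ as in Definition~\ref{pre:definition}, and let $U\subseteq\Spec R$ and $\widetilde U\subseteq\Spec\widetilde R$ be the regular locus and the corresponding open subset of the lifting. For a local section $a$ of $\sO_U$, choose a lift $\tilde a$. Then $\widetilde F(\tilde a)-\tilde a^p\in p\widetilde R$, and since $\widetilde R$ is flat over $W_2(k)$, multiplication by $p$ identifies $R$ with $p\widetilde R$. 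So we may write $\widetilde F(\tilde a)=\tilde a^p+p\,\delta(\tilde a)$ with $\delta(\tilde a)\in R$ well defined.

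Next define $\xi\colon\Omega^1_U\to F_*\Omega^1_U$ by
\[
 \xi(da)=a^{p-1}da+d\bigl(\delta(\tilde a)\bigr).
\]
This must be shown independent of the lift: changing $\tilde a$ to $\tilde a+pb$ changes $\delta$ by $b^p-b$ up to sign conventions, hence changes $\xi(da)$ by an exact $p$-th power differential $d(b^p)=0$. It must also satisfy the Leibniz and additivity rules needed to descend from $a\mapsto\xi(da)$ to an $\sO_U$-linear map $\Omega_U^1\to F_*\Omega_U^1$ with $\xi(a\,db)=a^p\xi(db)$. Additivity uses the identity $(\tilde a+\tilde b)^p=\tilde a^p+\tilde b^p+p\cdot(\text{Witt polynomial})$, whose differential is $a^{p-1}da+b^{p-1}db-(a+b)^{p-1}d(a+b)$. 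Multiplicativity is similar. Using the continuous differentials discussed above, this gives a well-defined morphism of locally free sheaves on $U$; the recipe is the one in \cite{BTLM}. This well-definedness check is the main technical step, though routine.

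For the splitting, the image of $\xi$ consists of closed forms: $d(a^{p-1}da)=0$ and $d\,d\delta=0$. Hence $C\circ\xi$ makes sense, and by the identities $C(a^{p-1}da)=da$ and $C(d\delta)=0$ we get $C(\xi(da))=da$. Since $C$ is $p^{-1}$-linear, i.e.\ $\sO_U$-linear from $F_*Z\Omega^1_U$, we conclude $C\circ\xi=\mathrm{id}$ on $\Omega^1_U$. Thus $\xi\colon\Omega^1_U\to F_*Z\Omega^1_U\subseteq F_*\Omega^1_U$ is split injective, though only onto closed forms. To obtain a splitting of $\Omega^1_U\to F_*\Omega^1_U$ itself, the plan is to use \cite{BTLM}: a Frobenius lift makes $Z\Omega^1_U\subseteq\Omega^1_U$ a direct summand compatibly, or equivalently $\xi$ extends to a splitting of $F_*\Omega^1_U\to F_*\Omega^1_U/B\Omega^1_U$. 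I expect to simply cite \cite[Section 2.3]{BTLM} for this.

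Finally, push forward along $j\colon U\hookrightarrow\Spec R$. Because $R$ is normal, the complement of $U$ has codimension at least $2$ (when $d=1$, $U$ is everything). Hence $j_*\Omega^1_U=\Omega^{[1]}_R$, and $j_*F_*\Omega^1_U=F_*\Omega^{[1]}_R$ since $F$ is finite and a homeomorphism. Applying $j_*$ to $\xi$ and its retraction gives $R$-linear maps $\Omega^{[1]}_R\to F_*\Omega^{[1]}_R\to\Omega^{[1]}_R$ composing to the identity, which is the asserted split injection.
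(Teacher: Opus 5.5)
Your construction of $\xi$ on the regular locus $U$, the check that $\xi$ lands in closed forms with $C\circ\xi=\mathrm{id}$, and the final step of applying $j_*$ are all sound. There is one slip: replacing $\tilde a$ by $\tilde a+p\tilde b$ changes $\delta(\tilde a)$ by $b^p$, not $b^p-b$. That is exactly why the change in $\xi(da)$ is $d(b^p)=0$.

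The gap is the step you flag yourself. A retraction of $\Omega^1_U\to F_*Z\Omega^1_U$ is not yet a retraction of $\Omega^1_U\to F_*\Omega^1_U$, and the mechanism you attribute to \cite{BTLM} is not what is proved there. Nothing you give makes $F_*Z\Omega^1_U$ a direct summand of $F_*\Omega^1_U$. The sequence $0\to F_*Z\Omega^1_U\to F_*\Omega^1_U\xrightarrow{d}F_*B\Omega^2_U\to0$ splits locally. But $U$ is the punctured regular locus and is not affine, so a global splitting requires an extension class on $U$ to vanish, and your argument does not control that class. Your ``equivalent'' reformulation via $F_*\Omega^1_U\to F_*\Omega^1_U/B\Omega^1_U$ also does not produce a map $F_*\Omega^1_U\to\Omega^1_U$.

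The missing idea is to pass to top degree, where every form is closed, and then dualize.
\begin{itemize}
\item Since $\Omega^1_U$ is locally free of rank $d$, the wedge pairing $\Omega^1_U\otimes\Omega^{d-1}_U\to\Omega^d_U$ is perfect.
\item $C$ is defined on all of $F_*\Omega^d_U$, and $C\circ\wedge^d\xi=\mathrm{id}$ by multiplicativity of $C$.
\item Define $\sigma\colon F_*\Omega^1_U\to\Omega^1_U$ by
\[
 \sigma(\omega)\wedge\beta=C\bigl(\omega\wedge(\wedge^{d-1}\xi)(\beta)\bigr)\qquad(\beta\in\Omega^{d-1}_U).
\]
The right-hand side is $\sO_U$-linear in $\beta$, and $\sigma(a^p\omega)=a\,\sigma(\omega)$, so $\sigma$ is well defined and $\sO_U$-linear.
\item Since $\xi(\alpha)\wedge(\wedge^{d-1}\xi)(\beta)=(\wedge^d\xi)(\alpha\wedge\beta)$, you get $\sigma(\xi(\alpha))\wedge\beta=\alpha\wedge\beta$ for all $\beta$. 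Hence $\sigma\circ\xi=\mathrm{id}$.
\end{itemize}
This is the argument of \cite[Theorem~2]{BTLM}: $\sigma$ is the dual of $\wedge^{d-1}\xi$ with respect to the trace $C\colon F_*\Omega^d_U\to\Omega^d_U$. It restricts to $C$ on closed forms, but it is not a projection onto $Z\Omega^1_U$. With $\sigma$ in hand, your pushforward along $j$ finishes the proof, and your outline then matches what the paper's citation of \cite{BTLM} supplies.
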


\begin{proof}
The proof of \cite[Theorem 2 and Section 2.3]{BTLM}
works equally well in our complete local setting,
using continuous differentials $\widehat\Omega^1_{\tilde{R}/W_2(k)}$ on the lifting $\tilde{R}$ of $R$.
\end{proof}

\section{A hypersurface criterion}\label{sec:criterion}
We record the hypersurface criterion in the complete local setting \cite[Corollary 4.12]{Zda18}.
% It is the formal power series version of
% \cite[Corollary 4.12]{Zda18}.
% We include the proof to specify the choices of liftings and the sign
% convention used later.

\begin{notation}\label{not:deltaone-definitions}
Let $S=k[[x_1,\ldots,x_n]]$ and
$\widetilde S=W_2(k)[[x_1,\ldots,x_n]]$.
We denote by $\Phi_0$ the continuous endomorphism of
$\widetilde S$ that restricts to the Witt Frobenius on $W_2(k)$
and satisfies $\Phi_0(x_i)=x_i^p$.
For $f=\sum_{\alpha\in\mathbb N^n}a_\alpha x^\alpha\in S$,
following \cite[Convention~4.1]{KT}, we set
\[
\Delta_1(f)
:=\sum_{\substack{\sum_\alpha i_\alpha=p\\0\leq i_\alpha<p}}
\overline{\frac{1}{p}\binom{p}{(i_\alpha)_\alpha}}
\prod_\alpha(a_\alpha x^\alpha)^{i_\alpha}.
\]
Here each $(i_\alpha)_\alpha$ has finite support.
Each indicated multinomial coefficient is divisible by $p$;
the bar denotes reduction in $k$ after division over $\mathbb Z$.
The sum is interpreted in the $(x_1,\ldots,x_n)$-adic topology.
For a polynomial this is precisely the mixed multinomial
expression of \cite[Convention~4.1]{KT}; the same expression
is well-defined for formal power series, since only finitely
many terms contribute in any fixed total degree.

To relate this notation to liftings, let
\[
\widetilde f_{\mathrm{can}}
:=\sum_{\alpha\in\mathbb N^n}[a_\alpha]x^\alpha
\in\widetilde S,
\]
where $[a]$ denotes the Teichm\"uller representative of $a\in k$.
Since $\Phi_0([a_\alpha]x^\alpha)=
[a_\alpha]^p x^{p\alpha}$, the multinomial expansion gives
\[
\Delta_1(f)
=\frac{\widetilde f_{\mathrm{can}}^{\,p}
              -\Phi_0(\widetilde f_{\mathrm{can}})}{p}
\bmod p.
\]
Division by $p$ means the inverse of the isomorphism of
$S$-modules
\[
S=\widetilde S/p\widetilde S
\xrightarrow{\ \sim\ }p\widetilde S,
\qquad \overline U\longmapsto pU.
\]
If $F=\widetilde f_{\mathrm{can}}+p\widetilde h$ is any lifting
of $f$, with $h=\widetilde h\bmod p$, then
\[
\frac{\Phi_0(F)-F^p}{p}\bmod p
=-\Delta_1(f)+h^p.
\]
Indeed, $F^p=\widetilde f_{\mathrm{can}}^{\,p}$ in
$\widetilde S$ and
$\Phi_0(p\widetilde h)=p\Phi_0(\widetilde h)$.
\end{notation}

% 2. Replacement for Lemma 3.2 and its proof.
\begin{lem}[{\cite[Theorem~4.2]{KT};
\cite[Corollary~4.12]{Zda18}}]\label{lem:deltaone-criterion}
Let $0\neq f\in(x_1,\ldots,x_n)S$.
Then $S/(f)$ is $F$-liftable if and only if there exists
$h\in S$ such that
\[
\Delta_1(f)+h^p
\in(f,f_{x_1}^{\,p},\ldots,f_{x_n}^{\,p})S.
\tag{\thethm.1}\label{eq:deltaone-hypersurface-criterion}
\]
\end{lem}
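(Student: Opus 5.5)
I will prove both directions by analysing liftings of $R=S/(f)$ together with their Frobenius lifts.

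\textbf{Reduction to a standard form.} Let $\widetilde R$ be any flat lifting of $R$ over $W_2(k)$. Lift $x_i$ to elements of $\widetilde R$; this gives a surjection $\widetilde S\to\widetilde R$. Its kernel is generated by a single lift $F$ of $f$, by flatness and Nakayama. Hence every lifting has the form $\widetilde S/(F)$, with $F=\widetilde f_{\mathrm{can}}+p\widetilde h$ for some $\widetilde h$.

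Next, describe the Frobenius lifts on such a ring. Any continuous $\sigma$-semilinear $\widetilde F$ on $\widetilde S/(F)$ reducing to Frobenius lifts to an endomorphism $\Psi$ of $\widetilde S$. The reason is that $\widetilde S$ is a power series ring, so $\Psi$ is determined by $\Psi(x_i)$, and any lift of $\widetilde F(x_i)$ works. Each such $\Psi$ has the shape $\Psi(x_i)=x_i^p+pg_i$ with $g_i\in\widetilde S$. Conversely, such a $\Psi$ descends to $\widetilde S/(F)$ exactly when $\Psi(F)\in(F)$.

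\textbf{Computing $\Psi(F)$.} By the Taylor expansion, valid because $p^2=0$,
\[
\Psi(F)=\Phi_0(F)+p\sum_i \Phi_0(\partial_iF)\,g_i .
\]
Modulo $p$, the term $\Phi_0(\partial_iF)$ becomes $f_{x_i}^p$. By Notation~\ref{not:deltaone-definitions},
\[
\Phi_0(F)=F^p+p\bigl(-\Delta_1(f)+h^p\bigr).
\]
Since $F^p\in(F)$, the condition $\Psi(F)\in(F)$ reduces to the requirement that
\[
p\Bigl(-\Delta_1(f)+h^p+\sum_i f_{x_i}^p\bar g_i\Bigr)\in(F)\cap p\widetilde S.
\]

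\textbf{The main obstacle: identifying $(F)\cap p\widetilde S$.} I claim $(F)\cap p\widetilde S=pF\widetilde S$, which under $S\cong p\widetilde S$ corresponds to $fS$. Suppose $aF\in p\widetilde S$. Reducing modulo $p$ gives $\bar a f=0$ in the domain $S$, and $f\neq0$, so $\bar a=0$. Thus $a\in p\widetilde S$ and $aF\in pF\widetilde S$.

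With this, the descent condition becomes
\[
-\Delta_1(f)+h^p+\sum_i f_{x_i}^p\bar g_i\in fS.
\]
This is equivalent to \eqref{eq:deltaone-hypersurface-criterion} after replacing $h$ by $-h$. The sign is harmless: $(-h)^p=-h^p$ for odd $p$, and for $p=2$ the signs agree modulo $2$.

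\textbf{Both directions.} For ``only if'', start from an $F$-lifting of $R$ and read off $h$ and the $\bar g_i$ as above. For ``if'', take the given $h$ and coefficients $c_i$ with $\Delta_1(f)+h^p\equiv\sum c_if_{x_i}^p \pmod f$. Set $F=\widetilde f_{\mathrm{can}}-p\widetilde h$ and $\Psi(x_i)=x_i^p+p\widetilde c_i$, adjusting the signs as computed. Then $\widetilde S/(F)$ is flat over $W_2(k)$, since $F$ is a nonzerodivisor modulo $p$. It is complete local, and $\Psi$ descends to the required Frobenius lift on it.
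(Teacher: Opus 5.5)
Your proof is correct. The paper's own proof consists only of the remark that Zdanowicz's hypersurface criterion \cite[Corollary~4.12]{Zda18} carries over to formal power series rings. So you have not taken a different strategy; you have written out, in the complete local setting, the standard Taylor-expansion argument behind the cited criterion.

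What your version adds is an explicit check of the three points the power-series adaptation rests on:
\begin{enumerate}[(i)]
\item Every flat lifting of $R$ is of the form $\widetilde S/(F)$. This follows from flatness over $W_2(k)$ and Nakayama for the nilpotent ideal $p\widetilde S$.
\item Every Frobenius lifting on $\widetilde S/(F)$ comes from a $\sigma$-semilinear endomorphism of $\widetilde S$. Continuous maps out of a power series ring are determined freely by the images of the variables.
\item One has $(F)\cap p\widetilde S=pF\widetilde S$. This is exactly what turns $\Psi(F)\in(F)$ into a congruence modulo $f$ in $S$.
\end{enumerate}
The citation buys brevity. Your route makes the lemma independent of how the reference passes from finite-type to complete algebras.

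One step needs to be stated more carefully. An arbitrary preimage in $\widetilde S$ of $\widetilde F(\tilde x_i)$ need not have the form $x_i^p+pg_i$. It can differ from such an element by a multiple of $F$ that is not divisible by $p$, and then your expansion modulo $p^2$ does not apply. The fix is simple. Since $\widetilde F$ reduces to Frobenius, write $\widetilde F(\tilde x_i)=\tilde x_i^{\,p}+pr_i$ with $r_i\in\widetilde R$, and take $g_i$ to be any preimage of $r_i$. With that choice, the rest of your argument goes through as written, including the sign bookkeeping $h\mapsto -h$ and the flatness of $\widetilde S/(F)$ in the converse direction.
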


\begin{proof}
The hypersurface criterion of
\cite[Corollary~4.12]{Zda18} applies equally to formal power
series rings.  
% To identify its obstruction with the notation
% above, write an arbitrary lifting of the equation as
% $F=\widetilde f_{\mathrm{can}}+p\widetilde u$ and a Frobenius
% lifting on $\widetilde S$ as
% $\Phi(x_i)=x_i^p+p\widetilde a_i$.
% Put $u=\widetilde u\bmod p$ and
% $a_i=\widetilde a_i\bmod p$.
% Taylor expansion modulo $p^2$ gives
% \[
% \frac{\Phi(F)-F^p}{p}\bmod p
% =-\Delta_1(f)+u^p
%  +\sum_{i=1}^n a_i f_{x_i}^{\,p}.
% \]
% Since $\widetilde S/(F)$ is flat over $W_2(k)$, the
% Frobenius lifting descends to this quotient precisely
% when the right-hand side belongs to $(f)$.
% Thus the criterion is equivalent to the existence of $u\in S$
% such that
% $-\Delta_1(f)+u^p\in(f,f_{x_1}^{\,p},\ldots,f_{x_n}^{\,p})S$.
% Negating this containment and setting $h=-u$ gives
% \eqref{eq:deltaone-hypersurface-criterion}.
\end{proof}

\section{Isolated complete intersections}\label{sec:isolated}

\subsection{The cotangent module and the embedding codimension}

We first isolate the consequence of Proposition~\ref{pre:splitting}
that controls the defining equations.

\begin{samepage}
\begin{prop}\label{iso:codimension}
Let $(R,\m)$ be a normal complete local complete intersection
$k$-algebra of dimension $d\geq3$, regular away from its closed point.
Choose a minimal presentation
\[
 R=S/(f_1,\ldots,f_c),\qquad
 S=k[[x_1,\ldots,x_n]],\qquad f_i\in\m_S^2,
\]
where $f_1,\ldots,f_c$ is a regular sequence and $n=d+c$.
Let
\[
 J=\left(\frac{\partial f_i}{\partial x_j}\right)_{\substack{1\leq i\leq c\\1\leq j\leq n}}
 \colon R^n\longrightarrow R^c
\]
be the Jacobian matrix. If $R$ is $F$-liftable, then
\begin{equation}\label{iso:jacobian-image}
 \operatorname{Im}J=\m R^c.
\end{equation}
Consequently, $c\leq1$.
\end{prop}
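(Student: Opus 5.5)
We follow the outline sketched in the introduction. The conormal sequence \eqref{pre:conormal} for the complete intersection $R$ gives a presentation
\[
0\longrightarrow R^c\xrightarrow{\ J^{T}\ } R^n\longrightarrow\Omega_R^1\longrightarrow0 .
\]
Left exactness holds because $I/I^2\cong R^c$ is free. Moreover, $J^T$ is injective, since it is injective at the generic point: $R$ is a normal domain, and the regular locus has $\Omega^1$ locally free of rank $d=n-c$.

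Since $d\ge3$ and $R$ is regular in codimension $\le d-1$, the module $\Omega_R^1$ has depth $\ge d-1\ge2$, so it is reflexive and $\Omega_R^1=\Omega_R^{[1]}$. Proposition~\ref{pre:splitting} then gives a split injection $\Omega_R^1\to F_*\Omega_R^1$, and applying $H^{d-1}_\m$ yields a split injection
\[
\varphi\colon N:=H^{d-1}_\m(\Omega_R^1)\to F_*N .
\]
Viewed on the underlying groups, $\varphi$ is an additive injective map $N\to N$ with $\varphi(au)=a^p\varphi(u)$.

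Next we check that $N$ has finite length. The long exact sequence of local cohomology for the two-term resolution, together with $H^{i}_\m(R)=0$ for $i<d$, gives
\[
N\cong\ker\bigl(H^d_\m(R^c)\to H^d_\m(R^n)\bigr).
\]
Since $\Omega_R^1$ is locally free on the punctured spectrum, $N$ is Matlis dual to $\Ext^1_R(\Omega_R^1,\omega_R)\cong\Ext^1_R(\Omega_R^1,R)$, using that $R$ is Gorenstein. This Ext module is supported at $\m$ and is therefore of finite length. Lemma~\ref{pre:finite-length} now gives $\m N=0$, and hence by duality $\m\,\Ext^1_R(\Omega_R^1,R)=0$.

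We now compute the Ext from the resolution. We have
\[
\Ext^1_R(\Omega_R^1,R)=\operatorname{coker}\bigl(J\colon R^n\to R^c\bigr)=R^c/\operatorname{Im}J .
\]
Here $J$ is the dual of $J^T$. Since $f_i\in\m_S^2$, all entries of $J$ lie in $\m$, so $\operatorname{Im}J\subseteq\m R^c$. Annihilation by $\m$ gives the reverse inclusion $\m R^c\subseteq\operatorname{Im}J$, which is exactly \eqref{iso:jacobian-image}.

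For the bound on $c$, we count minimal generators. The module $\m R^c$ needs $c\cdot\mu(\m)=cn$ generators, while $\operatorname{Im}J$ is generated by the $n$ columns of $J$. Hence $cn\le n$, so $c\le1$.

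The main obstacle is making the local cohomology step rigorous. It requires checking that the split injection induces an injective semilinear map on $H^{d-1}_\m$, which holds because a split injection stays injective after applying any additive functor, and $H^{d-1}_\m(F_*M)=F_*H^{d-1}_\m(M)$. It also requires the duality identification, which I would verify by dualizing the resolution and using the vanishing $H^i_\m(R)=0$ for $i<d$.
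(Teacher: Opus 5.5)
Your proposal is correct and follows the paper's proof essentially step for step. Both arguments use the conormal resolution $0\to R^c\to R^n\to\Omega^1_R\to0$, pass the split injection of Proposition~\ref{pre:splitting} to $H^{d-1}_{\m}$, identify the Matlis dual with $\operatorname{coker}J$ by local duality, apply Lemma~\ref{pre:finite-length}, and finish with the count $cn\le n$. One minor quibble: freeness of $I/I^2$ does not by itself give injectivity of $J^{T}$; it is your next argument, generic injectivity together with torsion-freeness of $R^c$, that does.
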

\end{samepage}

\begin{proof}
There is nothing to prove if $c=0$, so assume $c\geq1$. 
By the conormal exact sequence \eqref{pre:conormal},  we have
\begin{equation}\label{iso:resolution}
 0\longrightarrow R^c\xrightarrow{J^{\mathsf t}}R^n
 \longrightarrow\Omega_R^1\longrightarrow0,
\end{equation}
where the first map is generically injective and thus it is injective since $R^c$ is torsion-free.
This exact sequence shows that $\Omega^1_R$ is reflexive, and
$\Omega_R^1=\Omega_R^{[1]}$.

By Proposition~\ref{pre:splitting}, the map
$\Omega^1_R\to F_*\Omega^1_R$ is a split injection,
and therefore we obtain an injective $R$-linear map
\[
 H^{d-1}_{\m}(\Omega^1_R)\longrightarrow H^{d-1}_{\m}(F_*\Omega^1_R),
\]
which is a Frobenius-semilinear
injection as in Lemma~\ref{pre:finite-length}.

Let $E=E_R(k)$ be the injective hull of the residue field and write
$D(-)=\operatorname{Hom}_R(-,E)$. 
Set $N:=H_{\mathfrak m}^{d-1}(\Omega_R^1)$.
Since $R$ is complete and
Gorenstein, local duality and \eqref{iso:resolution} yield
\begin{equation}\label{iso:duality}
 D(N)\simeq\operatorname{Ext}^1_R(\Omega_R^1,R)
 \simeq T_R^1:=\operatorname{Coker}(J\colon R^n\to R^c).
\end{equation}
The module $\Omega_R^1$ is locally free away from $\m$, so $T_R^1$ is a
finite module supported at $\m$ and hence has finite length.
Matlis duality then implies that $N$ has finite length as well.
Lemma~\ref{pre:finite-length} gives $\m N=0$. Dualizing, we obtain
\[
 \m T_R^1=0.
\]
Equivalently, $\m R^c\subseteq\operatorname{Im}J$.
Every entry of $J$ belongs to $\m$, since $f_i\in\m_S^2$, so the
opposite inclusion also holds. This proves
\eqref{iso:jacobian-image}.

The image of $J$ is generated by its $n$ columns. On the other hand,
minimality of the presentation gives
$\dim_k(\m/\m^2)=n$, and therefore
\[
 \dim_k\frac{\m R^c}{\m(\m R^c)}
 =c\dim_k\frac{\m}{\m^2}=cn.
\]
By Nakayama's lemma, $cn$ is the minimal number of generators of
$\m R^c$. Thus $cn\leq n$, and $c\leq1$.
\end{proof}

\begin{cor}\label{iso:jacobian-hypersurface}
Let $S=k[[x_1,\ldots,x_n]]$ and $R=S/(f)$ be a normal isolated
hypersurface singularity of dimension $n-1\geq3$, with $f\in\m_S^2$.
If $R$ is $F$-liftable, then
\[
 J_f:=\left(\frac{\partial f}{\partial x_1},\ldots,
            \frac{\partial f}{\partial x_n}\right)R=\m_R.
\]
\end{cor}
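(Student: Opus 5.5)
The corollary is the case $c=1$ of Proposition~\ref{iso:codimension}, so the plan is to check that its hypotheses hold and then read off \eqref{iso:jacobian-image} for a single equation.

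First, I will verify that $R=S/(f)$ fits the setup of Proposition~\ref{iso:codimension}. It is a normal complete local complete intersection of dimension $d=n-1\ge3$, and it is regular away from its closed point. The single element $f\neq0$ is a regular sequence in the domain $S$. Since $f\in\m_S^2$, we have $\dim_k(\m_R/\m_R^2)=n$, so $S=k[[x_1,\dots,x_n]]$ with the one equation $f$ is a minimal presentation with $c=1$ and $n=d+c$.

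Next, applying Proposition~\ref{iso:codimension} with $c=1$ gives $\operatorname{Im}J=\m R$. Here $J\colon R^n\to R$ is the row vector $(\partial f/\partial x_1,\ldots,\partial f/\partial x_n)$. The image of a map $R^n\to R$ given by a row vector is the ideal generated by its entries. So $\operatorname{Im}J=J_f$, and therefore $J_f=\m_R$.

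The only point requiring any care is the minimality of the presentation, namely that no variable can be eliminated. This follows directly from $f\in\m_S^2$, so I do not expect a genuine obstacle; the substantive work was already done in Proposition~\ref{iso:codimension}.
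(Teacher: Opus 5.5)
Your proposal is correct and is exactly the paper's argument: the corollary is \eqref{iso:jacobian-image} specialized to $c=1$, where $\operatorname{Im}J$ is the ideal generated by the partial derivatives. Your verification of the hypotheses (that $f\neq0$ is a regular sequence and that the presentation is minimal because $f\in\m_S^2$) is just what the paper leaves implicit.
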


\begin{proof}
This is \eqref{iso:jacobian-image} with $c=1$.
\end{proof}

\subsection{Reduction to a quadratic equation}
\label{subsection:quadratic}

For a quadratic form $q$, its \emph{polar form} is the bilinear form
$b_q(v,w)=q(v+w)-q(v)-q(w)$. We explicitly require nondegeneracy of
this bilinear form; in characteristic two this is stronger than some
other conventions for a nondegenerate quadratic form.

We recall the linear algebra used below. Over an algebraically closed
field, a quadratic form on a vector space of dimension at least two
has a nonzero isotropic vector $v$. If its polar form is nondegenerate,
choose $w$ with $b_q(v,w)=1$. Replacing $w$ by $w-q(w)v$ makes
$q(w)=0$ while preserving $b_q(v,w)=1$. On the span of $v,w$, the
form is then $xy$. This plane is nondegenerate and splits off
orthogonally. Repeating the construction decomposes the form into
hyperbolic planes, with at most a one-dimensional remainder.
In characteristic two the polar form is alternating, so its
nondegeneracy forces even dimension and there is no remainder.
In odd characteristic a one-dimensional remainder, when present,
can be rescaled to $t^2$. These observations justify all the quadratic
normal forms used below.

\begin{lem}\label{iso:formal-morse}
Let $S=k[[x_1,\ldots,x_n]]$, let $f\in S$ have order two, and let $q$ be its
homogeneous quadratic part. If the polar form of $q$ is nondegenerate,
then a formal change of coordinates carries $f$ to $q$.
\end{lem}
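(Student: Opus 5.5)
I will prove this formal Morse lemma by successive approximation in the $\m_S$-adic topology. The argument is characteristic-free: it uses only the nondegeneracy of the polar form $b_q$, never division by $2$. First I use the linear algebra recalled above to change coordinates linearly so that $q$ is a standard form. That is, $q=x_1x_2+\cdots+x_{2m-1}x_{2m}$ with $n=2m$, or, in odd characteristic, $q=x_1x_2+\cdots+x_{2m-1}x_{2m}+x_n^2$ with $n=2m+1$. The key input is that the gradient map of $q$ is an isomorphism on linear forms. Since $\partial q/\partial x_j=b_q(e_j,x)$ is a linear form, nondegeneracy of $b_q$ means that the partials $\partial q/\partial x_1,\ldots,\partial q/\partial x_n$ form a basis of $\m_S/\m_S^2$.

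The inductive step is as follows. Suppose that after some coordinate change $f=q+r$ with $r\in\m_S^{s}$, $s\ge3$. Write the degree-$s$ part $r_s$ as $\sum_j \ell_j\,\partial q/\partial x_j$ with $\ell_j$ homogeneous of degree $s-1\ge2$. This is possible because the partials of $q$ span the linear forms, so $(\partial q/\partial x_j)_j=\m_S$, and hence every homogeneous form of degree $s$ lies in $\m_S^{s-1}\cdot\m_S$ with homogeneous coefficients. Now substitute $x_j\mapsto x_j-\ell_j$. This is a valid coordinate change because $\ell_j\in\m_S^2$. Taylor expansion, which holds over any ring for polynomials, gives
\[
q(x-\ell)=q(x)-\sum_j\ell_j\,\frac{\partial q}{\partial x_j}+q(\ell),
\]
with $q(\ell)\in\m_S^{2s-2}\subseteq\m_S^{s+1}$. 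Also $r(x-\ell)\equiv r(x)$ modulo $\m_S^{s+1}$, since $r\in\m_S^s$ and $\ell_j\in\m_S^2$. Hence the new $f$ equals $q$ plus an element of $\m_S^{s+1}$.

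Finally, the composite of the substitutions $\phi_s$ converges. Each $\phi_s$ is congruent to the identity modulo $\m_S^{s-1}$ on the variables, so the infinite composite $\cdots\circ\phi_4\circ\phi_3$ converges $\m_S$-adically to a continuous automorphism $\phi$ of $S$. This $\phi$ carries $f$ to an element congruent to $q$ modulo every $\m_S^s$, so it carries $f$ to $q$ exactly. The main point requiring care is the decomposition of $r_s$ when $p=2$. Here one must not write $q$-partials as $2x_i$; the standard form $x_1x_2+\cdots$ makes the partials visibly equal to the variables $x_2,x_1,\dots$, so the decomposition is immediate. I expect no serious obstacle beyond checking the convergence bookkeeping.
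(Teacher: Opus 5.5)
Your proof is correct and is essentially the paper's argument: both use $q_{x_i}=b_q(x,e_i)$ to see that the partials of $q$ form a basis of the linear forms, remove the degree-$N$ term by $x_i\mapsto x_i-v_i$ via $q(x-v)=q(x)-\sum_i q_{x_i}v_i+q(v)$ together with $2N-2\ge N+1$, and pass to the $\m_S$-adic limit. Your preliminary linear normalization to a standard form is harmless but unnecessary, since, as you note yourself, nondegeneracy of $b_q$ already gives the decomposition of $r_s$ in every characteristic, including $p=2$.
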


\begin{proof}
We write $q_{x_i}:=\partial q/\partial x_i$.
Let $V=k^n$, and let $e_1,\dots,e_n$ be its standard basis.
Since $q$ is homogeneous of degree two, we have
\[
  q(x+t e_i)=q(x)+t\,b_q(x,e_i)+t^2q(e_i).
\]
Comparing the coefficients of $t$ gives
\[
  q_{x_i}(x)=b_q(x,e_i).
\]
The nondegeneracy of $b_q$ implies that the map
\[
  V\longrightarrow V^\vee,\qquad
  v\longmapsto b_q(-,v)
\]
is an isomorphism.
Hence $q_{x_1},\dots,q_{x_n}$ form a basis of $V^\vee$,
the space of linear forms. Suppose inductively that the terms of degrees three
through $N-1$ have been removed, so that
\[
 f=q+h_N+r_{N+1},\qquad
 \deg h_N=N,\quad r_{N+1}\in\m_S^{N+1},\quad N\geq3.
\]
Since the $q_{x_i}$ generate $\m_S$, there are homogeneous polynomials
$v_i$ of degree $N-1$ such that
$h_N=\sum_i q_{x_i}v_i$.
The substitution $x_i\mapsto x_i-v_i$ is an automorphism with identity
linear part. Under this substitution,
\[
 q(x-v)=q(x)-\sum_iq_{x_i}(x)v_i+q(v).
\]
Here $q(v)$ has degree $2N-2$. The change in $h_N$ also has order at
least $2N-2$, and the image of $r_{N+1}$ still belongs to
$\m_S^{N+1}$. Since $2N-2\geq N+1$, the substitution removes $h_N$
without changing any lower-degree term. Iterating, we obtain a
sequence of coordinate changes whose successive corrections have
orders tending to infinity. Their compositions converge
$\m_S$-adically to an automorphism, and the transformed equation
converges to $q$. %No division by two is used.
\end{proof}

\begin{cor}\label{iso:quadratic-reduction}
Let $S=k[[x_1,\ldots,x_n]]$ and $f\in\m_S^2$.
If the normal isolated hypersurface singularity $R=S/(f)$
has dimension at least three and is
$F$-liftable, then it is formally isomorphic to a quadratic
hypersurface whose polar form is nondegenerate.
\end{cor}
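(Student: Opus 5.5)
The plan is to combine Corollary~\ref{iso:jacobian-hypersurface} with Lemma~\ref{iso:formal-morse}. First, the equality $J_f=\m_R$ will be converted into a statement about the quadratic part $q$ of $f$. Then the formal Morse lemma will be invoked.

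\textbf{Step 1: translate to $S$.} By Corollary~\ref{iso:jacobian-hypersurface}, $J_f=\m_R$ in $R=S/(f)$. Lifting to $S$, this says
\[
(f_{x_1},\ldots,f_{x_n},f)S=\m_S.
\]
Since $f\in\m_S^2$, we have $f\in\m_S^2$ and each $f_{x_i}\in\m_S$. Passing to $\m_S/\m_S^2$, the image of $f$ is zero. Hence the images of $f_{x_1},\ldots,f_{x_n}$ must span the $n$-dimensional space $\m_S/\m_S^2$.

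\textbf{Step 2: identify the linear parts.} The class of $f_{x_i}$ in $\m_S/\m_S^2$ is its linear part. That linear part is exactly $q_{x_i}$, because the cubic and higher terms of $f$ differentiate into $\m_S^2$. As computed in the proof of Lemma~\ref{iso:formal-morse}, $q_{x_i}(x)=b_q(x,e_i)$. So the linear forms $b_q(-,e_i)$ span $V^\vee$.

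This forces the map $V\to V^\vee$, $v\mapsto b_q(-,v)$, to be surjective, hence an isomorphism. Therefore $b_q$ is nondegenerate. In particular $q\neq0$, so $f$ has order exactly two.

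\textbf{Step 3: conclude.} Lemma~\ref{iso:formal-morse} now applies and gives a formal change of coordinates carrying $f$ to $q$. Hence $R\cong S/(q)$, a quadratic hypersurface with nondegenerate polar form. If one wants normal forms, the linear algebra recalled before Lemma~\ref{iso:formal-morse} decomposes $q$ into hyperbolic planes, plus $t^2$ when $n$ is odd and $p$ is odd.

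The only point needing care is Step 1 in characteristic two. There one must not confuse $f_{x_i}$ with a multiple of $q_{x_i}$ involving a factor $2$. Using the polar form $b_q$ directly, as above, avoids any division by two, so the argument stays uniform in $p$. I expect no substantial obstacle; the whole content is already in Corollary~\ref{iso:jacobian-hypersurface}.
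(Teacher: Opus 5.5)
Your proposal is correct and follows the paper's proof essentially verbatim. Corollary~\ref{iso:jacobian-hypersurface} makes the partial derivatives span $\m_S/\m_S^2$; their linear parts are the $q_{x_i}=b_q(-,e_i)$, so $b_q$ is nondegenerate; and Lemma~\ref{iso:formal-morse} finishes, with you spelling out two points the paper leaves implicit (surjectivity of $V\to V^\vee$, and that $f$ has order exactly two).
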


\begin{proof}
%We may assume $f\in\m_S^2$. 
By
Corollary~\ref{iso:jacobian-hypersurface}, the partial derivatives of $f$
generate $\m_R$. Since $f\in\m_S^2$, the natural map
$\m_S/\m_S^2\to\m_R/\m_R^2$ is an isomorphism.
The linear parts of the partial derivatives are the derivatives of
the quadratic part $q$ of $f$. They span this $n$-dimensional space,
so the polar form of $q$ is nondegenerate (cf.\ the argument in the proof of Lemma \ref{iso:formal-morse}).
The conclusion follows
from Lemma~\ref{iso:formal-morse}.
\end{proof}

\begin{cor}\label{iso:double-plane}
Suppose $p=2$ and set $S=k[[x_1,x_2,x_3,x_4]]$.
A normal isolated three-dimensional hypersurface singularity $S/(f)$
with an equation
\[
 f=x_1^2+f_{\geq3},\qquad f_{\geq3}\in\m_S^3,
\]
is not $F$-liftable.
\end{cor}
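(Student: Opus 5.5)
Argue by contradiction, using Corollary~\ref{iso:quadratic-reduction}. Suppose $S/(f)$ is $F$-liftable. The singularity is normal, isolated, of dimension three, with $f\in\m_S^2$, so Corollary~\ref{iso:quadratic-reduction} shows that $f$ is formally equivalent to a quadratic form $q'$ whose polar form is nondegenerate. The goal is to compare the quadratic part of $f$ with $q'$ and see that this is impossible when $p=2$.

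The first step is to check that the quadratic part is an invariant of the formal isomorphism class up to linear change of coordinates. An automorphism $\psi$ of $S$ with $\psi(f)=u\,q'$, for a unit $u$, preserves $\m_S^3$. Hence the degree-two part of $f$, namely $x_1^2$, is carried by the linear part of $\psi$ to $u(0)q'$. So $x_1^2$ and $q'$ are equivalent up to a linear substitution and a nonzero scalar. Note that the isomorphism of quotient rings lifts to an automorphism of $S$ taking the ideal $(f)$ to $(q')$, because both presentations are minimal, so this unit version is the correct form to use.

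The second step is to compute the polar form of $x_1^2$. In characteristic two,
\[
b(v,w)=(v_1+w_1)^2-v_1^2-w_1^2=2v_1w_1=0,
\]
so the polar form is identically zero. Nondegeneracy of the polar form is preserved under linear substitution and scaling, which contradicts the nondegeneracy of the polar form of $q'$.

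More directly, I could avoid the change of coordinates and argue from the proof of Corollary~\ref{iso:quadratic-reduction}. That proof only uses the fact that the linear parts of the partials $\partial q/\partial x_i$ span $\m_S/\m_S^2$. Here $q=x_1^2$, so all $\partial q/\partial x_i$ vanish when $p=2$. Then $\partial f/\partial x_i\in\m_S^2$ for every $i$, and $J_f\subseteq\m_R^2\subsetneq\m_R$. This contradicts Corollary~\ref{iso:jacobian-hypersurface}, so I would present this version as the main argument. There is no real obstacle; the only care needed is to confirm that $\m_S/\m_S^2\to\m_R/\m_R^2$ is an isomorphism, which holds because $f\in\m_S^2$, and that $\m_R\ne\m_R^2$, which holds because $\dim R=3>0$.
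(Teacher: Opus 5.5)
Your main argument is correct and is exactly the paper's proof: since $p=2$, every partial derivative of $f$ lies in $\m_S^2$, so $J_f\subseteq\m_R^2\subsetneq\m_R$, which contradicts Corollary~\ref{iso:jacobian-hypersurface}. Your alternative route also works. It lifts the isomorphism to an automorphism of $S$ and compares quadratic parts to see that the zero polar form of $x_1^2$ cannot match a nondegenerate one. It is not needed here.
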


\begin{proof}
All first partial derivatives belong to $\m_S^2$, so
$J_f\subseteq\m_R^2$. This contradicts
Corollary~\ref{iso:jacobian-hypersurface}, because
$\m_R/\m_R^2\ne0$.
\end{proof}

\subsection{Quadratic hypersurfaces in higher dimension}

For odd $p$, the non-$F$-liftability of ordinary double points in
dimension at least four was proved in \cite{Zda18}. The following
calculation also treats characteristic two and supplies the precise
obstruction needed here.

\begin{lem}\label{iso:quadrics}
Let $q\in k[x_1,\ldots,x_n]$ be a quadratic form with nondegenerate
polar form, where $n\geq5$. Then
$k[[x_1,\ldots,x_n]]/(q)$ is not $F$-liftable.
\end{lem}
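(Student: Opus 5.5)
The plan is to apply the hypersurface criterion, Lemma~\ref{lem:deltaone-criterion}, and show that the obstruction element is nonzero by passing to a torus-invariant subring where it can be computed explicitly.

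\textbf{Normal form.} By the linear algebra recalled in \S\ref{subsection:quadratic}, after a linear change of coordinates
\[
q=\sum_{i=1}^m x_iy_i \quad(n=2m),\qquad\text{or}\qquad q=\sum_{i=1}^m x_iy_i+t^2\quad(n=2m+1,\ p\ \text{odd}).
\]
Since $n\ge5$, we have $m\ge3$ in the first case and $m\ge2$ in the second.

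\textbf{Reducing the criterion.} The partial derivatives of $q$ are, up to units, exactly the coordinates. Hence the ideal in \eqref{eq:deltaone-hypersurface-criterion} is $I=(q,x_1^p,\dots,y_m^p[,t^p])$. For any $h\in S$, the element $h^p$ is a power series in the $p$-th powers of the variables. So $h^p\in(x_1^p,\dots)\subseteq I$ unless $h$ has a nonzero constant term. A constant term contributes a unit to $\Delta_1(q)+h^p$, and $\Delta_1(q)\in\m_S$, so that sum would not lie in $I\subseteq\m_S$. Hence $F$-liftability is equivalent to
\[
\Delta_1(q)\in qB,\qquad B:=S/(x_1^p,\dots,y_m^p[,t^p]).
\]

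\textbf{Torus weights.} Give $x_i$ weight $e_i\in\mathbb Z^m$, give $y_i$ weight $-e_i$, and give $t$ weight $0$. Then $q$ is homogeneous of weight zero. By the multinomial formula in Notation~\ref{not:deltaone-definitions}, $\Delta_1(q)$ is a combination of products of the weight-zero monomials $x_iy_i$ and $t^2$, so it is also of weight zero. The ring $B$ is $\mathbb Z^m$-graded, and the weight-zero part of $qB$ is $q\cdot C$, where $C=B_0$. Set $u_i=x_iy_i$; then
\[
C=k[u_1,\dots,u_m(,t)]/(u_i^p(,t^p)).
\]
It therefore suffices to show $\Delta_1(q)\notin qC$.

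\textbf{The computation.} Over $\mathbb Z$, $\Delta_1$ of a sum of monomials is $\frac1p\bigl((\sum m_j)^p-\sum m_j^p\bigr)$. Work in $C/(q)$ and eliminate $u_m=-(u_1+\dots+u_{m-1}(+t^2))$; this substitution respects $u_m^p=0$ in characteristic $p$. Doing the computation over $\mathbb Z$ and then reducing, $\Delta_1(q)$ becomes, up to sign,
\[
\Delta_1\bigl(u_1+\dots+u_{m-1}(+t^2)\bigr)
\]
viewed in
\[
k[u_1,\dots,u_{m-1}(,t)]/(u_i^p(,t^p)).
\]
In the even case with $m\ge3$, this element contains the monomial $u_1u_2^{p-1}$ with coefficient $\frac1p\binom p1=1$. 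In the odd case with $m\ge2$, it contains $u_1^{p-1}t^2$, which survives modulo $t^p$ because $p\ge3$. Either way the obstruction is nonzero, so $\Delta_1(q)\notin qC$ and the hypersurface is not $F$-liftable.

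\textbf{Expected obstacle.} The main difficulty is the bookkeeping of the reduction: checking that the weight-zero part of $qB$ is exactly $qC$, and that the substitution for $u_m$ commutes with forming $\Delta_1$ up to sign. The sign question matters for $p=2$, where only the even case occurs. I would verify both by carrying out the integral formula for $\Delta_1$ in $\mathbb Z[u]$ before reducing modulo $p$.
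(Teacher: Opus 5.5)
Your proof is correct. It shares its first step with the paper, killing $h^p$ by comparing constant terms to reduce to $\Delta_1(q)\in qT$ in the truncated ring, but certifies $\Delta_1(q)\notin qT$ differently.

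\textbf{The paper's route.} It specializes to the fixed forms $q_0=xy+zw+t^2$ ($p$ odd, after diagonalizing the orthogonal complement) and $q_0=xy+zw+uv$ ($p=2$). It then applies an annihilator test in the Artinian ring $T$: it exhibits $s=q_0^{p-1}t^{p-3}$ (resp.\ $s=q_0$) with $sq_0=0$ but $s\Delta_1(q_0)\neq0$. This costs the coefficient computation $\binom{2p-1}{p-1}\equiv1\pmod p$.

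\textbf{Your route.} You pass to the $\mathbb Z^m$-weight-zero subring $C=k[u_1,\dots,u_m(,t)]/(u_i^p(,t^p))$, where $q$ is linear in the $u_i$. Because $q^p=0$ in $C$, the quotient $C/(q)$ is again a truncated polynomial ring. The image of $\Delta_1(q)$ can then be computed exactly, and a single coefficient $\frac1p\binom p1=1$ certifies that it is nonzero. The sign issue you flag is harmless. For odd $p$ the substitution $u_m=-(u_1+\dots+u_{m-1}(+t^2))$ gives exactly $\Delta_1(u_1+\dots+u_{m-1}(+t^2))$ over $\mathbb Z$. For $p=2$ the discrepancy is $\sum_{j<m}u_j^2$, which vanishes in $C$.

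\textbf{What each buys.} Your approach needs the grading bookkeeping and, for odd $p$, a split according to the parity of $n$. In return it computes the obstruction class completely rather than merely detecting it. In particular it shows directly that the class vanishes for $n=3,4$, consistent with the ordinary double point being $F$-liftable, and makes visible why $n\ge5$ is the threshold. The paper's approach avoids the grading and the elimination and works with only two small fixed forms, at the price of a less transparent test element.
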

\begin{proof}
% By the quadratic normal forms recalled at the beginning of Subsection~4.2,
% after a linear change of coordinates we may assume that all nonzero
% coefficients of $q$ are $1$.
% Let $Q$ denote the same polynomial over $\mathbb Z$.
% The definition of $\Delta_1$ then gives
% \[
% \Delta_1(q)
% =\frac{Q(x_1,\ldots,x_n)^p-Q(x_1^p,\ldots,x_n^p)}{p}
% \bmod p.
% \]
% The numerator has integer coefficients divisible by $p$, so the quotient
% is taken over $\mathbb Z$ before reduction.
By the hypersurface criterion of Lemma~\ref{lem:deltaone-criterion}, Frobenius liftability would
imply the existence of a power series $h$ such that
\[
\Delta_1(q)+h^p\in(q,x_1^p,\ldots,x_n^p).
\tag{\thethm.1}\label{eq:deltaone-quad-criterion}
\]
Indeed, the $p$-th powers of the partial derivatives of $q$ generate
$(x_1^p,\ldots,x_n^p)$, by nondegeneracy of the polar form (cf.\ the argument in the proof of Lemma \ref{iso:formal-morse}).

Assume first that $p$ is odd.
We split off two hyperbolic planes as explained at the beginning
of Subsection~\ref{subsection:quadratic}. Since $p\neq2$, their nondegenerate orthogonal
complement can be diagonalized. Since $k$ is algebraically closed,
we can then rescale the remaining coordinates so that
\[
q=xy+zw+t^2+x_6^2+\cdots+x_n^2.
\]
Setting $x_6=\cdots=x_n=0$ in \eqref{eq:deltaone-quad-criterion} gives the corresponding containment
for $q_0=xy+zw+t^2$, since this specialization also sends
$\Delta_1(q)$ to $\Delta_1(q_0)$.
%This is simply a specialization of an ideal containment; no assertion
%about preservation of Frobenius liftability under hyperplane sections
%is required. 
Work in the graded Artinian ring
\[
T=k[x,y,z,w,t]/(x^p,y^p,z^p,w^p,t^p).
\]
Every $p$-th power in $T$ is constant. Comparing constant terms in \eqref{eq:deltaone-quad-criterion}
forces the image of $h^p$ to vanish. Thus we would have
$\Delta_1(q_0)\in q_0T$. Write $A=xy$, $B=zw$, and $C=t^2$, so that
\[
\Delta_1(q_0)
=\frac{(A+B+C)^p-A^p-B^p-C^p}{p}\bmod p.
\]
The element $s=q_0^{p-1}t^{p-3}$ belongs to $T$ and satisfies
$sq_0=q_0^pt^{p-3}=0$. We claim, however, that
$s\Delta_1(q_0)\neq0$. The coefficient of the nonzero monomial
\[
M=x^{p-1}y^{p-1}z^{p-1}w^{p-1}t^{p-1}
\]
in $s\Delta_1(q_0)$ is the coefficient of $A^{p-1}B^{p-1}C$ in
$(A+B+C)^{p-1}\Delta_1(q_0)$.
None of the terms $(A+B+C)^{p-1}A^p$, $(A+B+C)^{p-1}B^p$, or
$(A+B+C)^{p-1}C^p$ contributes to this coefficient.
Computing over $\mathbb Z$ before reduction, the coefficient is therefore
\[
\frac{(2p-1)!}{p((p-1)!)^2}
=\binom{2p-1}{p-1}
=\prod_{j=1}^{p-1}\frac{p+j}{j}
\equiv1\pmod p.
\]
Thus $s\Delta_1(q_0)\neq0$, whereas $sq_0=0$ contradicts
$\Delta_1(q_0)\in q_0T$. 
%The calculation includes $p=3$.

Now assume $p=2$. The polar form is alternating and nondegenerate,
so $n=2r$ is even. Since $n\geq5$, we have $r\geq3$.
Over the algebraically closed field $k$, the quadratic form is a sum
of hyperbolic planes:
\[
q=x_1y_1+\cdots+x_ry_r.
\]
Specializing all but the first three pairs to zero reduces \eqref{eq:deltaone-quad-criterion}
to the corresponding containment for $q_0=xy+zw+uv$.
In
\[
T=k[x,y,z,w,u,v]/(x^2,y^2,z^2,w^2,u^2,v^2),
\qquad A=xy,\quad B=zw,\quad C=uv,
\]
we have $q_0=A+B+C$ and $\Delta_1(q_0)=AB+AC+BC$.
As before, the square $h^2$ is constant in $T$ and vanishes by comparison
of constant terms. Thus $\Delta_1(q_0)\in q_0T$ would be necessary.
But
\[
q_0^2=0,
\qquad q_0\Delta_1(q_0)=ABC=xyzwuv\neq0.
\]
Multiplying the asserted containment by $q_0$ gives a contradiction.
\end{proof}

\subsection{Proof of the classification}

\begin{proof}[Proof of Theorem~\ref{thm:main-ici}]
Suppose $R$ is $F$-liftable. Proposition~\ref{iso:codimension} shows
that its embedding codimension is zero or one. In codimension zero,
$R$ is a regular complete local ring. In codimension one, we have a
minimal hypersurface presentation with at least four variables, and
Corollary~\ref{iso:quadratic-reduction} reduces it to a quadratic form
with nondegenerate polar form. If $d\geq4$, there are at least five
variables, and Lemma~\ref{iso:quadrics} gives a contradiction.
If $d=3$, the quadratic form has four variables. 
% Over an algebraically
% closed field, a quadratic form with nondegenerate polar form on a
% four-dimensional space is a sum of two hyperbolic planes. Hence
Since $k$ is algebraically closed, we have
\[
 R\simeq k[[x,y,z,t]]/(xy-zt).
\]

% Conversely, a regular complete local $k$-algebra with residue field
% $k$ is isomorphic to a power series ring. Its standard lifting over
% $W_2(k)$ carries the Frobenius lifting that acts as $\sigma$ on
% coefficients and sends each variable to its $p$-th power.
% For the ordinary double point, take
% \[
%  \widetilde R=W_2(k)[[x,y,z,t]]/(xy-zt).
% \]
% This ring is flat over $W_2(k)$: the displayed equation reduces to a
% nonzerodivisor in $k[[x,y,z,t]]$, so the local criterion for flatness
% applies. The same rule on coefficients and variables preserves the
% defining ideal, because
% \[
%  x^py^p-z^pt^p=(xy)^p-(zt)^p
% \]
% is divisible by $xy-zt$ over $W_2(k)$. It therefore defines a
% Frobenius lifting of $\widetilde R$.
Conversely, a regular complete local $k$-algebra with residue field
$k$ is a power series ring and hence Frobenius liftable.
The ordinary double point is likewise Frobenius liftable, using
the flat lifting
\[
 \widetilde R=W_2(k)[[x,y,z,t]]/(xy-zt)
\]
with the map acting as $\sigma$ on coefficients and sending each
variable to its $p$-th power.
\end{proof}

\begin{rem}\label{iso:ci-essential}
The complete intersection hypothesis is essential. Let $r\geq2$
be prime to $p$, and let $\mu_r$ act on $\mathbf A_k^d$, $d\geq2$,
by scalar multiplication. The action is free away from the origin,
so the quotient is regular there. Its vertex is singular: the
invariant ring is the $r$-th Veronese subring, whose local embedding
dimension is $\binom{r+d-1}{d-1}>d$.
The completion at the vertex is nevertheless $F$-liftable in every
dimension. Indeed, lift the invariant monomial semigroup algebra
to $W_2(k)$, let Frobenius act as $\sigma$ on coefficients and as
$p$-th power on monomials, and complete at the vertex; see also
\cite{Zda18} for the toric construction.
% In Proposition~\ref{iso:codimension}, the complete intersection
% hypothesis supplies both the two-term free resolution
% \eqref{iso:resolution} and the Gorenstein duality used in
% \eqref{iso:duality}.
\end{rem}

\begin{prop}\label{prop:terminal-cdv-fregular}
Let $X$ be a Gorenstein terminal threefold over an algebraically closed field
$k$ of characteristic $p>0$. Then every singular closed point $P\in X$ is a
compound Du Val singularity, i.e., a general hyperplane section of $X$
through $P$ has a Du Val singularity at $P$. Moreover, if $p>5$, then $X$ is strongly
$F$-regular. In particular, $X$ is $F$-pure when $p>5$.
\end{prop}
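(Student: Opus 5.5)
The argument splits into two parts of quite different nature: the compound Du Val claim is geometric and rests on the literature, while strong $F$-regularity for $p>5$ is a reduction to Du Val surfaces through $F$-inversion of adjunction.

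\emph{Step 1: compound Du Val.} Let $P\in X$ be a singular closed point and $R=\widehat{\sO}_{X,P}$. By \cite{Tanaka25}, $R$ is an isolated complete intersection. Since $X$ is terminal, hence canonical, Gorenstein and normal, Kawamata's argument via a general elephant shows that $R$ is in fact a hypersurface of multiplicity two. I will not reprove this; I will cite the positive characteristic version of Reid's theorem, namely that a Gorenstein terminal threefold point is cDV, as established in the literature along with \cite{Tanaka25}. Concretely, a general hyperplane section $H$ through $P$ satisfies the following.
\begin{itemize}
\item[(a)] $H$ is normal, because $X$ has isolated singularities and $H$ is $S_2$ as a Cartier divisor on a Cohen--Macaulay scheme.
\item[(b)] $(X,H)$ is plt near $P$, equivalently $H$ has canonical, hence Du Val, singularities by adjunction.
\end{itemize}
Part (b) is where inversion of adjunction in dimension three in positive characteristic is needed; it is available for threefolds when $p>5$ by work of Hacon--Xu and Birkar. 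For small $p$, I would instead rely on the explicit classification in \cite{Tanaka25}.

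\emph{Step 2: strong $F$-regularity for $p>5$.} Choose $H$ as above. Then $H$ is a Du Val surface singularity. When $p>5$, every Du Val singularity is strongly $F$-regular: by Artin's classification, for $p>5$ all rational double points are taut and are quotient singularities by linearly reductive groups, or one can check Fedder's criterion on the normal forms. Since $H$ is a Cartier divisor with $H$ strongly $F$-regular, $F$-inversion of adjunction (Schwede's result that $(X,H)$ is purely $F$-regular whenever $H$ is strongly $F$-regular and $K_X+H$ is Cartier) shows that $X$ is strongly $F$-regular near $P$. Away from singular points, $X$ is regular. Strong $F$-regularity is local, so $X$ is strongly $F$-regular, and in particular $F$-split locally, that is, $F$-pure.

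The main obstacle is Step 1 in small characteristic, specifically guaranteeing that a general hyperplane section is Du Val, since Bertini-type and adjunction arguments can fail there. I would first try to read it off directly from \cite{Tanaka25}, which I expect contains the cDV statement. Failing that, I would restrict the Du Val assertion to the generality afforded by Tanaka's structural result.
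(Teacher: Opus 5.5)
Your Step~2 matches the paper's argument. There, a Du Val section is strongly $F$-regular for $p>5$ by \cite{Hara98}, and $F$-inversion of adjunction along a Cartier divisor lifts this to $X$. The paper applies \cite[Corollary~4.11]{HaraWatanabe02} on the completion and descends along $\sO_{X,P}\to R$; your use of Schwede's $F$-adjunction directly on $X$ does the same job.

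The gap is Step~1, and Step~2 depends on it. You never derive the compound Du Val property from terminality. Item (b) asserts that $(X,H)$ is plt for a general $H$ through $P$, and notes that this is equivalent to $H$ being Du Val. But neither side of that equivalence is established, so the step is circular. Adjunction and inversion of adjunction only pass between the two statements; they do not produce either one. By your own account they are also available only for $p>5$, whereas the proposition asserts cDV in every characteristic.

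Falling back on \cite{Tanaka25} does not close the gap. As the paper uses it, that result gives only $\widehat{\sO}_{X,P}\simeq k[[x,y,z,t]]/(f)$ with $\ord(f)=2$. Multiplicity two does not by itself give a Du Val section: for odd $p$, the general hyperplane sections of $x^2+y^4+z^4+t^4=0$ are elliptic, not Du Val. Terminality has to enter the cDV argument, and in your proposal it never does.

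The missing ingredient is the Mather--Jacobian theory of Ishii--Reguera, which works in every characteristic. The argument runs as follows:
\begin{enumerate}
\item By Tanaka's theorem the completion is a hypersurface, so $\sO_{X,P}$ is a complete intersection (this property descends from the completion), and after shrinking, $X$ is lci near $P$.
\item Since $X$ is Gorenstein and terminal, every prime divisor over $X$ with centre $P$ has positive integral discrepancy. Hence all log discrepancies at $P$ are at least $2=\dim X-1$.
\item For normal lci varieties, Mather--Jacobian log discrepancies agree with the usual ones \cite[Remark~2.5(2)]{IR17}. So $\operatorname{mld}_{\mathrm{MJ}}(P;X)\ge\dim X-1$.
\item \cite[Theorem~4.8]{IR17} then forces $(X,P)$ to be cDV.
\end{enumerate}
This directly yields some $\ell\in\m_R$ with $R/\ell R$ a rational double point, which is exactly the input your Step~2 needs. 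In particular you do not need the plt/adjunction detour or the separate normality claim (a).
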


\begin{proof}
Let $P\in X$ be a singular closed point, and set
$A=\sO_{X,P}$ and $R=\widehat A$.
By \cite[Theorem~1.2]{Tanaka25}, we have
\[
 R\simeq k[[x,y,z,t]]/(f),\qquad \ord(f)=2.
\]
Since the complete intersection property descends from completion,
$A$ is a complete intersection. Thus, after shrinking around $P$, we may
assume that $X$ is a locally complete intersection.

% For a prime divisor $E$ over $X$ with centre $P$, write $a(E;X)$ for its
% discrepancy and $A_X(E)=a(E;X)+1$ for its log discrepancy.
% Since $K_X$ is Cartier, $a(E;X)$ is an integer. Terminality therefore gives
% \[
%  a(E;X)\geq 1,\qquad A_X(E)\geq 2.
% \]
% For a normal locally complete intersection variety, Mather--Jacobian log
% discrepancies coincide with the usual log discrepancies by
% \cite[Remark~2.5(2)]{IR17}. Consequently,
% \[
%  \mld_{\mathrm{MJ}}(P;X)
%  =\mld(P;X)\geq 2=\dim X-1.
% \]
% By \cite[Theorem~4.8]{IR17}, $(X,P)$ is a compound Du Val
% singularity, since the other singular alternatives in that theorem
% are two-dimensional.
Since $X$ is Gorenstein and terminal, the discrepancy $a(E;X)$
of every prime divisor $E$ over $X$ with centre $P$ is a positive
integer. Hence all log discrepancies at $P$ are at least $2$.
As $X$ is normal and locally a complete intersection,
\cite[Remark~2.5(2) and Theorem~4.8]{IR17} implies that $(X,P)$
is a compound Du Val singularity.

Now assume that $p>5$. By the preceding paragraph, there is a nonzero
element $\ell\in\mathfrak m_R$ such that $R/\ell R$ is a two-dimensional
rational double point. By \cite[Corollary~4.9]{Hara98}, $R/\ell R$ is
strongly $F$-regular. Since $R$ is $F$-finite and Gorenstein,
\cite[Corollary~4.11]{HaraWatanabe02} implies that $R$ is strongly $F$-regular.
Strong $F$-regularity descends along the faithfully flat completion map
$A\to R$, so $A$ is strongly $F$-regular as well. At smooth closed points,
the local rings are regular and hence strongly $F$-regular. Localization
then gives the assertion at every point of $X$.
%Finally, strong $F$-regularity implies $F$-purity.
\end{proof}

\section{A lifting criterion for hypersurfaces of the form \texorpdfstring{$xy-g$}{xy-g}}\label{sec:ca-criterion}

% \begin{notation}\label{ca:setup}
% Throughout this section, we set
% \[
%  B=k[[z,t]],\qquad \widetilde B=W_2(k)[[z,t]],\qquad
%  R_g=B[[x,y]]/(xy-g),
% \]
% where $0\neq g\in(z,t)$. We denote by $\Phi_0$ the endomorphism of
% $\widetilde B$ that restricts to the Witt Frobenius on $W_2(k)$ and sends
% $z$ and $t$ to $z^p$ and $t^p$, respectively. For a lifting
% $G\in\widetilde B$ of $g$, set
% \begin{equation}\label{ca:obstruction}
%  P_g=\frac{\Phi_0(G)-G^p}{p}\bmod p\in B.
% \end{equation}
% As in Lemma~\ref{pre:criterion}, division by $p$ in this expression means
% the inverse of the isomorphism
% $\widetilde B/p\widetilde B\xrightarrow{\sim}p\widetilde B$ induced by
% multiplication by $p$. If $G$ is replaced by $G+p\widetilde h$, then
% $P_g$ is replaced by $P_g+h^p$, where $h=\widetilde h\bmod p$.
% Indeed, $\Phi_0(p\widetilde h)=p\Phi_0(\widetilde h)$ and
% $(G+p\widetilde h)^p=G^p$, since $p^2=0$.

% \end{notation}
\begin{notation}\label{not:deltaone-ca}
For the rest of the paper, we set
\[
B=k[[z,t]],\qquad
\widetilde B=W_2(k)[[z,t]],\qquad
R_g=B[[x,y]]/(xy-g),
\]
where $0\neq g\in(z,t)$.
We denote by $\Phi_0$ the continuous endomorphism of
$\widetilde B$ that restricts to the Witt Frobenius on $W_2(k)$
and sends $z,t$ to $z^p,t^p$, respectively.
We use $\Delta_1$ as defined in Notation~\ref{not:deltaone-definitions}, applied to the
variables $z,t$.
Unless stated otherwise, $G\in\widetilde B$ denotes the
coefficientwise Teichm\"uller lifting of $g$.
Thus
\[
\frac{\Phi_0(G)-G^p}{p}\bmod p=-\Delta_1(g).
\tag{\thethm.1}\label{eq:deltaone-g-defect}
\]
As in Notation~\ref{not:deltaone-definitions}, division by $p$ means the inverse of
the isomorphism $B\simeq p\widetilde B$ induced by
multiplication by $p$.
\end{notation}

\begin{rem}
\label{rem:cA}
The rings above include the completions of all normal
three-dimensional $cA$ singularities in the usual sense,
namely those whose general hyperplane section is of type $A_n$
for some $n\geq 1$.
Indeed, such a section has embedding dimension three, so the
completion is a hypersurface $k[[x,y,z,t]]/(f)$.
The $A_n$ section shows that the polar form of the quadratic
part of $f$ has rank at least two.
Over the algebraically closed field $k$, a hyperbolic plane
therefore splits off, and a linear change of coordinates
puts the quadratic part in the form $xy+q_0(z,t)$.
The degree-by-degree argument in the proof of Lemma~\ref{iso:formal-morse},
applied to $x,y$ while keeping $z,t$ fixed, eliminates all
higher-order terms involving $x$ or $y$, yielding
\[
\widehat{\mathcal O}_{X,P}
\simeq k[[x,y,z,t]]/(xy-g(z,t)).
\]
Here $g\in(z,t)^2$, and normality implies $g\neq 0$.
\end{rem}

\begin{prop}\label{prop:deltaone-ca-criterion}
The following conditions are equivalent.
\begin{enumerate}
\item $R_g$ is $F$-liftable.
\item There exists $h\in B$ such that
\[
\Delta_1(g)+h^p\in(g^p,g_z^p,g_t^p)B.
\tag{\thethm.1}\label{eq:deltaone-ca-criterion}
\]
\item There exist a lifting $G'\in\widetilde B$ of $g$,
a Frobenius lifting $\Phi$ on $\widetilde B$, and an element
$\widetilde c\in\widetilde B$ such that
\[
\Phi(G')=(G')^p(1+p\widetilde c).
\tag{\thethm.2}\label{eq:deltaone-preserved-equation}
\]
\end{enumerate}
\end{prop}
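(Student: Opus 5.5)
We plan to prove (1)$\Leftrightarrow$(2) with the hypersurface criterion Lemma~\ref{lem:deltaone-criterion}, and (2)$\Leftrightarrow$(3) by the Taylor-expansion computation underlying that criterion.

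\emph{(1)$\Leftrightarrow$(2).} Put $f=xy-g\in S=k[[x,y,z,t]]$. We claim $\Delta_1(f)=-\Delta_1(g)$, possibly up to a $p$-th power; in characteristic two, $-xy$ and $+xy$ need care, and any discrepancy is absorbed into $h^p$. Indeed, the Teichm\"uller lifting of $f$ is $xy-G$ (up to a sign convention, which only changes $h$), $\Phi_0(xy)=(xy)^p$, and the cross terms of $(xy-G)^p$ lie in the ideal $(xy)$ of $\widetilde S$. Hence
\[
\Delta_1(f)\equiv \pm\Delta_1(g)+xy\,(\ast)\pmod{h^p\text{-ambiguity}}.
\]
The Jacobian ideal gives $(f,f_x^p,\dots,f_t^p)=(xy-g,\,y^p,\,x^p,\,g_z^p,\,g_t^p)$. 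Modulo $(x^p,y^p,xy-g)$ we have $g^p=x^py^p\equiv 0$, so $g^p$ lies in the ideal. The condition therefore reads
\[
\Delta_1(g)+xy(\ast)+h^p\in(xy-g,x^p,y^p,g_z^p,g_t^p).
\]
The key algebraic step is to compute the intersection of this ideal with $B$. We claim it is $(g^p,g_z^p,g_t^p)B$. To see this, note that $S/(xy-g,x^p,y^p)$ is free over $B/(g^p)$, with basis $x^i,y^j$ for $0\le i,j<p$ after substituting $xy=g$. Indeed, $B[[x,y]]/(xy-g)$ is $B$-free on the monomials $x^i$, $y^j$, and imposing $x^p=y^p=0$ gives the module $\bigoplus_{i<p}(B/g^{p-i})x^i\oplus\cdots$, whose degree-$0$ component is $B/g^p$. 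Because $x^p$ generates, this needs checking: $x^p\cdot y^j=g^jx^{p-j}$, so the constant component of the ideal is generated by $g^p$ together with the $g_z^p,g_t^p$ contributions. One must also handle $h\in S$ rather than $B$ and the terms $xy(\ast)$. For these we project onto the component spanned by $x^0y^0$ in the $B$-basis: $h^p$ projects to a $p$-th power of an element of $B$ modulo the ideal, and $xy(\ast)$ is rewritten via $xy=g$. I expect this projection bookkeeping to be the main obstacle. The cleanest route is probably to choose a lifting cleverly rather than use $\Delta_1$ naively: lift $R_g$ as $W_2[[x,y,z,t]]/(xy-G')$ and look for Frobenius lifts of the form $x\mapsto x^p(1+p a)$, $y\mapsto y^p(1+pb)$, and so on, which leads directly to~(3).

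\emph{(3)$\Rightarrow$(1).} Given~(3), define $\widetilde F$ on $\widetilde B[[x,y]]$ by $\Phi$ on $\widetilde B$, $x\mapsto x^p(1+p\widetilde c)$, $y\mapsto y^p$. Then
\[
\widetilde F(xy-G')=(xy)^p(1+p\widetilde c)-(G')^p(1+p\widetilde c),
\]
which is divisible by $xy-G'$. This gives a Frobenius lifting of the flat lifting $\widetilde B[[x,y]]/(xy-G')$.

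\emph{(2)$\Leftrightarrow$(3).} Write $\Phi(z)=z^p+pa$, $\Phi(t)=t^p+pb$, and $G'=G+p\widetilde u$. Modulo $p$, the computation in Notation~\ref{not:deltaone-definitions} gives
\[
\frac{\Phi(G')-G'^p}{p}=-\Delta_1(g)+u^p+a g_z^p+b g_t^p.
\]
Condition~(3) says this equals $g^pc$. Hence (3)$\Rightarrow$(2) with $h=-u$, and conversely, given~(2), we solve for $a,b,c$ and then lift.

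\emph{(1)$\Rightarrow$(3) or (2).} Here we apply Lemma~\ref{lem:deltaone-criterion} to $f$ and reduce the resulting membership to $B$ by the projection argument above.
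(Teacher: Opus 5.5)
Your plan has the same shape as the paper's proof. You prove (2)$\Leftrightarrow$(3) by the same Taylor expansion, and (1)$\Rightarrow$(2) by applying Lemma~\ref{lem:deltaone-criterion} to $f=xy-g$ and projecting to the $x^0y^0$ part. Your direct argument for (3)$\Rightarrow$(1), extending $\Phi$ by $x\mapsto x^p(1+p\widetilde c)$ and $y\mapsto y^p$, is correct. Flatness of $\widetilde B[[x,y]]/(xy-G')$ holds because $xy-g$ is a nonzerodivisor in $k[[x,y,z,t]]$. This makes sufficiency independent of the criterion, whereas the paper goes (3)$\Rightarrow$(2)$\Rightarrow$(1).

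The gap is in (1)$\Rightarrow$(2), at exactly the step you call the main obstacle. You record the cross terms only as $xy(\ast)$. Your opening guess that $\Delta_1(f)=-\Delta_1(g)$ ``up to a $p$-th power'' is false: for $p=2$ the cross term is $xyg$, which is not a square. Computing $I\cap B=(g^p,g_z^p,g_t^p)B$ for $I=(xy-g,x^p,y^p,g_z^p,g_t^p)S$ does not close this, because the element the criterion tests does not lie in $B$. Projecting it to the degree-zero part $B/(g^p)$ gives $\Delta_1(g)+e+h^p$ with an unidentified $e$. Condition (2) follows only if you show $e\in(g^p,g_z^p,g_t^p)$.

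The missing fact is an exact congruence. Since $g$ involves only $z,t$, the multinomial definition splits as
\[
\Delta_1(f)=-\Delta_1(g)+\sum_{i=1}^{p-1}\overline{\frac1p\binom pi}\,(xy)^i(-g)^{p-i}.
\]
Modulo $f$, each summand is a scalar multiple of $g^p\equiv x^py^p$. Hence $\Delta_1(f)\equiv-\Delta_1(g)\pmod{(f,x^p,y^p)}$, and the sign is absorbed by replacing $H$ with $-H$. So the ``discrepancy'' goes into the ideal, not into $h^p$.

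For the projection, the paper uses the $\mathbb Z$-grading on $T=B[[x,y]]/(x^p,y^p)$ with $\deg x=1$ and $\deg y=-1$. The generators $xy-g$, $g_z^p$, $g_t^p$ are homogeneous of degree $0$, and $T_0=B[s]/(s^p)$ with $s=xy$. So the degree-zero part of $T/(xy-g,g_z^p,g_t^p)$ is $B/(g^p,g_z^p,g_t^p)$, and $H^p=H(0,0,z,t)^p$ in $T$. Your first claim that $S/(xy-g,x^p,y^p)$ is free over $B/(g^p)$ is wrong. Your corrected decomposition into pieces $B/(g^{p-|i|})$ is precisely this grading.

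Finally, the ``cleaner'' route you suggest does not bypass any of this. That route uses liftings $xy-G'$ and Frobenius lifts of the shape $x\mapsto x^p(1+pa)$. But an arbitrary lifting of $R_g$ has equation $xy-G'+pE$ with $E$ possibly involving $x,y$, and an arbitrary Frobenius lift need not have that shape. Reducing to that shape is exactly what the criterion plus projection accomplishes.
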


\begin{proof}
We first prove the equivalence of (1) and (2).
Set $S=B[[x,y]]$ and $f=xy-g$.
By the definition of $\Delta_1$ in Notation~\ref{not:deltaone-definitions}, we have
\[
\Delta_1(f)
=-\Delta_1(g)
 +\sum_{i=1}^{p-1}
 \overline{\frac{1}{p}\binom{p}{i}}
 (xy)^i(-g)^{p-i}.
\]
Here the bar denotes reduction modulo $p$ of the indicated
integer coefficient.
% Indeed, the monomial $xy$ is distinct from every monomial of $g$,
% and $\Delta_1(-g)=-\Delta_1(g)$ by homogeneity of degree $p$.
Modulo $(f)$, every term in the sum is a scalar multiple of $g^p$.
Since $g^p\equiv x^py^p\pmod{(f)}$, we obtain
\[
\Delta_1(f)\equiv-\Delta_1(g)
\pmod{(f,x^p,y^p)}.
\tag{\thethm.3}\label{eq:deltaone-ca-congruence}
\]
% This argument is valid also when $p=2$; in that case the sum
% consists of the single term $xyg$, which is congruent to $g^2$
% modulo $(f)$.
Since $f_x=y$, $f_y=x$, $f_z=-g_z$, and $f_t=-g_t$,
Lemma~\ref{lem:deltaone-criterion} and \eqref{eq:deltaone-ca-congruence} show, after replacing
the auxiliary power series by its negative, that $R_g$ is
$F$-liftable if and only if
\[
\Delta_1(g)+H^p
\in(xy-g,x^p,y^p,g_z^p,g_t^p)S
\tag{\thethm.4}\label{eq:deltaone-ca-four-variable-criterion}
\]
for some $H\in S$.

To eliminate $x$ and $y$, consider the finite free $B$-algebra
\[
T=B[[x,y]]/(x^p,y^p)
 =\bigoplus_{0\leq i,j<p}Bx^iy^j.
\]
Give $T$ the $\mathbb Z$-grading determined by
$\deg x=1$, $\deg y=-1$, and $\deg B=0$.
If $s=xy$, then $T_0=B[s]/(s^p)$.
The elements $xy-g$, $g_z^p$, and $g_t^p$ have degree zero,
so the degree-zero component of their quotient is
\[
\begin{aligned}
\bigl(T/(xy-g,g_z^p,g_t^p)\bigr)_0
&\simeq B[s]/(s^p,s-g,g_z^p,g_t^p)\\
&\simeq B/(g^p,g_z^p,g_t^p).
\end{aligned}
\]
% The first isomorphism follows by taking the degree-zero part
% of each coefficient multiplying the three homogeneous generators.
Furthermore, $H^p=H(0,0,z,t)^p$ in $T$, because every term
involving $x$ or $y$ has zero $p$-th power in $T$.
Consequently, \eqref{eq:deltaone-ca-four-variable-criterion}
is equivalent to \eqref{eq:deltaone-ca-criterion}, with
$h=H(0,0,z,t)$; conversely one may take $H=h$.

We next prove that (2) implies (3).
Write
\[
\Delta_1(g)+h^p=cg^p+ag_z^p+bg_t^p,
\qquad a,b,c\in B,
\]
and choose liftings
$\widetilde h,\widetilde a,\widetilde b,\widetilde c
\in\widetilde B$.
Set $G'=G-p\widetilde h$ and define $\Phi$ by the Witt
Frobenius on coefficients and
\[
\Phi(z)=z^p+p\widetilde a,\qquad
\Phi(t)=t^p+p\widetilde b.
\]
Taylor expansion modulo $p^2$ and
\eqref{eq:deltaone-g-defect} give
\[
\begin{aligned}
\frac{\Phi(G')-(G')^p}{p}\bmod p
&=-\Delta_1(g)-h^p+ag_z^p+bg_t^p\\
&=-cg^p.
\end{aligned}
\]
By the identification $B\simeq p\widetilde B$, this is
equivalent to
$\Phi(G')=(G')^p(1-p\widetilde c)$.
Replacing $\widetilde c$ by its negative gives (3).

Conversely, assume (3).
Write $G'=G-p\widetilde h$,
$\Phi(z)=z^p+p\widetilde a$, and
$\Phi(t)=t^p+p\widetilde b$, and denote their reductions
by $h,a,b$, with $c=\widetilde c\bmod p$.
The same expansion of
\eqref{eq:deltaone-preserved-equation} yields
\[
-\Delta_1(g)-h^p+ag_z^p+bg_t^p=cg^p.
\]
Hence
$\Delta_1(g)+h^p=-cg^p+ag_z^p+bg_t^p$, proving (2).
%
% For completeness, (3) also gives a lifting of $R_g$ directly.
% The ring
% \[
% \widetilde R_g=\widetilde B[[x,y]]/(xy-G')
% \]
% is flat over $W_2(k)$, since the reduction of its defining
% equation is a nonzerodivisor in the power series ring over $k$.
% Extend $\Phi$ by
% \[
% \Phi(x)=x^p(1+p\widetilde c),\qquad \Phi(y)=y^p.
% \]
% Then
% \[
% \Phi(xy-G')
% =(1+p\widetilde c)\bigl((xy)^p-(G')^p\bigr)
% \]
% is divisible by $xy-G'$.
% Thus $\Phi$ descends to a Frobenius lifting on
% $\widetilde R_g$.
\end{proof}

\section{Classification in the general \texorpdfstring{$cA$}{cA} case}\label{sec:ca-classification}

\begin{notation}\label{ca:factors}
Write the factorization of $g$ in the unique factorization domain $B$ as
\[
 g=u\prod_{i=1}^r h_i^{n_i},
\]
where $u\in B^\times$ and the $h_i$ are pairwise nonassociated irreducible
elements. We set
\begin{equation}\label{ca:reduced-divisor}
 D_g=\prod_{p\nmid n_i}h_i,
\end{equation}
with $D_g=1$ when the product is empty. Although the element $D_g$ is
defined only up to multiplication by a unit, its zero locus is a
well-defined reduced formal plane curve.
\end{notation}

We will show that $F$-liftability
forces this curve to have at most two smooth branches meeting transversely.

\begin{lem}\label{ca:primitive}
Let $h\in k[[z,t]]$ be irreducible. Then $h_z$ and $h_t$ are not both
divisible by $h$. Consequently, $dh$ is part of a basis of
$\Omega^1_{B/k}\otimes_B B_{(h)}$.
\end{lem}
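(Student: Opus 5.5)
We argue by contradiction. Suppose $h\mid h_z$ and $h\mid h_t$. Using the expansion from Section~2, write
\[
h=\sum_{0\le i,j<p} z^it^j a_{ij}^p,\qquad a_{ij}\in B .
\]
Then $h_z=\sum i\,z^{i-1}t^ja_{ij}^p$ and $h_t=\sum j\,z^it^{j-1}a_{ij}^p$. The plan is to pass to a finite extension where the divisibility hypothesis becomes a statement about $p$-bases, and conclude that $h$ would have to be a $p$-th power. That contradicts irreducibility: $h=a^p$ with $a$ a nonunit is reducible, and $h$ is a nonunit because it is irreducible.

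\emph{Step 1: reduce to derivations vanishing on $\kappa$.} Let $\kappa=\operatorname{Frac}(B/(h))$ and let $v$ be the discrete valuation of $B_{(h)}$. If $h\mid h_z$ and $h\mid h_t$, then every $k$-derivation of $B$ maps $h$ into $(h)$, because $\Omega^1_{B/k}$ is free on $dz,dt$. Consequently $\partial_z$ and $\partial_t$ descend to derivations of $B/(h)$, and hence to derivations of $\kappa$.

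\emph{Step 2: a degree count.} The residue field $\kappa$ is the fraction field of a one-dimensional complete domain over the perfect field $k$, so it has $p$-degree $1$: we have $[\kappa:\kappa^p]=p$. On the other hand, $B_{(h)}$ has $p$-degree $2$ over $B_{(h)}^p$, with $p$-basis $z,t$. The cleanest way to finish is to use the exact sequence
\[
(h)/(h^2)\xrightarrow{\ \delta\ } \Omega^1_{B}\otimes\kappa\longrightarrow \Omega^1_{\kappa}\longrightarrow 0 .
\]
Here $\Omega^1_\kappa$ has dimension $1$, since $\kappa$ is separably generated of transcendence degree $1$ over the perfect field $k$. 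Also $\Omega^1_B\otimes\kappa$ has dimension $2$. Hence $\delta$ is nonzero, so $dh=h_z\,dz+h_t\,dt$ is nonzero in $\Omega^1_B\otimes\kappa$. That is exactly the statement that $h_z$ and $h_t$ are not both divisible by $h$.

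\emph{Step 3: the consequence.} Since $dh$ has nonzero image in the fiber $\Omega^1_B\otimes\kappa$ and $\Omega^1_B\otimes B_{(h)}$ is free of rank $2$ over the DVR $B_{(h)}$, Nakayama's lemma shows that $dh$ extends to a basis. This gives the second assertion.

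The main obstacle is justifying $\dim_\kappa\Omega^1_\kappa=1$ in this formal (non-finite-type) setting. We plan to normalize $B/(h)$, which gives $k[[s]]$ since $k$ is algebraically closed and $B/(h)$ is a complete one-dimensional domain. Then $\kappa=k((s))$, and the expansion argument of Section~2 applied to $k[[s]]$ shows $\kappa=\kappa^p(s)$ with $[\kappa:\kappa^p]=p$. We must also take care that $\Omega^1$ here means the continuous (equivalently, $p$-basis) differentials; with that convention $\Omega^1_\kappa=\kappa\,ds$. If the exact sequence is awkward with completed differentials, we fall back to a direct argument. Namely, if $h\mid h_z$ and $h\mid h_t$, then every derivation of $B$ preserves $(h)$, and so do the derivations $z^i t^j\partial_z$. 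This would make both $\partial_z$ and $\partial_t$ induce derivations of $\kappa$. The space of continuous derivations of $\kappa$ is one-dimensional, so these must satisfy a relation $\alpha\bar\partial_z+\beta\bar\partial_t=0$ with $(\alpha,\beta)\ne(0,0)$ in $\kappa$. Applying this relation to $\bar z$ and $\bar t$ gives $\alpha=\beta=0$, a contradiction.
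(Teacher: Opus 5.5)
Your argument is correct, but it takes a different route from the paper's. It is also not the route announced in your opening paragraph: you never show that $h$ would be a $p$-th power.

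The paper stays inside $B$. If $(h)$ is stable under $\partial_z$ and $\partial_t$, then applying $\partial_z^{p-1}\partial_t^{p-1}$ to $z^{p-1-i}t^{p-1-j}h$ extracts $((p-1)!)^2a_{ij}^p\in(h)$. Primality gives $a_{ij}\in(h)$ for all $(i,j)$, so $h\in(h^p)$, which is absurd. This uses only the $p$-basis expansion of Section~2 and primality of $(h)$, with no information about the residue field.

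Your proof is instead the Jacobian criterion at the generic point of $V(h)$. The conormal sequence tensored with $\kappa$ forces $dh\otimes1\neq0$ once $\dim_\kappa\Omega^1_\kappa\le1$. You obtain this bound from $\kappa\cong k((s))$, using finiteness of the normalization of the complete local domain $B/(h)$ and the Cohen structure theorem, together with $[\kappa:\kappa^p]=p$. This is more conceptual: it is just generic smoothness of a reduced curve over a perfect field. It also gives directly the fibrewise nonvanishing of $dh$ needed for the basis statement. The cost is importing more structure theory than the paper's few-line computation.

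Two corrections to the write-up. First, drop the justification ``separably generated of transcendence degree $1$.'' It is false for $k((s))$, whose transcendence degree over $k$ is infinite; the $p$-degree argument you give afterwards is the correct one. Second, your concern about continuous versus K\"ahler differentials is unnecessary. Every derivation of $\kappa$ kills $\kappa^p\supseteq k$, so $\Omega^1_{\kappa/k}=\Omega^1_{\kappa/\kappa^p}=\kappa\,ds$ already holds for ordinary K\"ahler differentials, exactly as $\Omega^1_{B/k}$ is free on $dz,dt$.
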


\begin{proof}
Suppose that both derivatives belong to $(h)$. Then the ideal $(h)$ is
preserved by $\partial_z$ and $\partial_t$: for example,
$\partial_z(ah)=a_zh+ah_z\in(h)$ for every $a\in B$.
Since $k$ is perfect, there is a unique expansion
\[
 h=\sum_{0\leq i,j<p}a_{ij}^pz^it^j,\qquad a_{ij}\in B.
\]
For each pair $(i,j)$, differentiation gives
\begin{equation}\label{ca:coefficient-extraction}
 \partial_z^{p-1}\partial_t^{p-1}
 \bigl(z^{p-1-i}t^{p-1-j}h\bigr)
   =((p-1)!)^2a_{ij}^p.
\end{equation}
Indeed, derivatives of the coefficients $a_{ij}^p$ vanish. Among the
possible exponents between $0$ and $2p-2$, the $(p-1)$-st derivative of a
monomial is nonzero only when its exponent is $p-1$: smaller exponents
give zero, and for larger exponents the product of consecutive integers
in its derivative contains a factor $p$. This proves
\eqref{ca:coefficient-extraction} in both variables.
Its left-hand side belongs to $(h)$, and $(p-1)!$ is nonzero in $k$.
Thus $a_{ij}^p\in(h)$ for all $(i,j)$. Since $(h)$ is prime,
$a_{ij}\in(h)$ for all $(i,j)$, which implies $h\in(h^p)$, a contradiction.
At least one of $h_z,h_t$ is therefore a unit in the discrete valuation
ring $B_{(h)}$. A vector with a unit coefficient can be completed to a
basis of the free module $B_{(h)}dz\oplus B_{(h)}dt$, proving the last
assertion.
\end{proof}

\begin{lem}\label{ca:determinant}
If $R_g$ is $F$-liftable, then
\begin{equation}\label{ca:fedder-condition}
 D_g^{p-1}\notin(z^p,t^p)B.
\end{equation}
\end{lem}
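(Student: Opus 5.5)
The plan is to use the characterization (3) of Proposition~\ref{prop:deltaone-ca-criterion}. It gives a lifting $G'$ of $g$, a Frobenius lifting $\Phi$ on $\widetilde B$ and $\widetilde c$ with $\Phi(G')=(G')^p(1+p\widetilde c)$. Let $\xi\colon\Omega^1_B\to F_*\Omega^1_B$ be the divided differential $\xi(df)=\tfrac1p\,d\Phi(\widetilde f)\bmod p$, where $\widetilde f$ is any lift of $f$. Concretely, if $\Phi(\widetilde f)=\widetilde f^p+p\widetilde a_f$, then $\xi(df)=f^{p-1}df+da_f$. This is well defined and semilinear, and it satisfies the Leibniz-type rule
\[
\xi(d(fg))=f^p\xi(dg)+g^p\xi(df).
\]
The standard facts from \cite{BTLM} are that $\xi$ lands in closed forms and that $C\circ\xi=\mathrm{id}$. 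This follows from $C(f^{p-1}df)=df$ and $C(da)=0$. Set $\delta\in B$ by $\xi(dz)\wedge\xi(dt)=\delta\,dz\wedge dt$. By multiplicativity of $C$ we get $C(\delta\,dz\wedge dt)=dz\wedge dt$.

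The first key step is that $D_g^{p-1}$ divides $\delta$. Applying $d/p$ to the relation in (3) gives $\xi(dg)=g^{p-1}dg+g^p\,dc$. Fix a factor $h=h_i$ with $p\nmid n=n_i$, write $g=h^nv$ with $h\nmid v$, and work in the DVR $B_{(h)}$. The Leibniz rule gives
\[
n\,h^{(n-1)p}v^p\,\xi(dh)=g^{p-1}dg+g^pdc-h^{np}\xi(dv).
\]
The right-hand side lies in $h^{np-1}\Omega^1_{B_{(h)}}$, since $g^{p-1}dg=h^{n(p-1)}v^{p-1}(nh^{n-1}v\,dh+h^ndv)$. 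Since $n$ and $v$ are units in $B_{(h)}$, we get $\xi(dh)\in h^{p-1}\Omega^1_{B_{(h)}}$. By Lemma~\ref{ca:primitive}, $dh$ extends to a basis $\{dh,\omega\}$ of $\Omega^1_{B_{(h)}}$. Since $\xi$ is semilinear, $\xi(dz)\wedge\xi(dt)$ is a unit multiple of $\xi(dh)\wedge\xi(\omega)$, up to the $p$-th power of the unit $\det$ of the change of basis. Hence $h^{p-1}\mid\delta$ in $B_{(h)}$, so $h^{p-1}\mid\delta$ in $B$. The distinct $h_i$ are coprime and $B$ is a UFD, so $D_g^{p-1}\mid\delta$.

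The second step is the Cartier contradiction. Suppose $D_g^{p-1}\in(z^p,t^p)B$. Then $\delta=z^p\alpha+t^p\beta$. Write $\delta=\sum_{0\le i,j<p}\delta_{ij}^p z^it^j$. The relevant coefficient $\delta_{p-1,p-1}$ is then $z\alpha_{p-1,p-1}+t\beta_{p-1,p-1}\in(z,t)$. However, $C(\delta\,dz\wedge dt)=\delta_{p-1,p-1}\,dz\wedge dt$, which must equal $dz\wedge dt$. This forces $\delta_{p-1,p-1}=1$, a contradiction. For the empty case $D_g=1$ the statement is trivial.

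I expect the main obstacle to be the rigorous setup of $\xi$ in the complete-local, continuous-differential setting. This includes its well-definedness independent of lifts, closedness of its image, and $C\circ\xi=\mathrm{id}$ on $B$ (a regular ring, so the Cartier isomorphism \cite{BLM} applies). The localization and basis-change bookkeeping at $B_{(h)}$ is the other delicate point.
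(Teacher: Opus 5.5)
Your proposal is correct and follows essentially the same route as the paper. You use the divided differential of the Frobenius lifting from Proposition~\ref{prop:deltaone-ca-criterion}(3), the local computation at each factor $h$ with $p\nmid n$ to get $h^{p-1}$ dividing the determinant, and the Cartier identity forcing the $z^{p-1}t^{p-1}$-coefficient of the determinant to be $1$. The only cosmetic differences are that you extract just $\xi(dh)\in h^{p-1}\Omega^1_{B_{(h)}}$, which is all that is needed, and you handle the change of basis by wedge products rather than by the matrix $A^{-1}MA^{[p]}$.
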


\begin{proof}
Choose $G'$, $\Phi$, and $\widetilde c$ as in
Proposition~\ref{prop:deltaone-ca-criterion}(3), and write $c=\widetilde c\bmod p$.
The divided differential of $\Phi$ defines an additive map
\[
 \zeta:\Omega^1_{B/k}\longrightarrow\Omega^1_{B/k},\qquad
 \zeta(dv)=\frac{d\Phi(\widetilde v)}p\bmod p,
\]
where $\widetilde v$ is any lifting of $v$ and differentials on
$\widetilde B$ are continuous and relative to $W_2(k)$.
The $B$-linear map
$\Omega^1_{B/k}\to F_*\Omega^1_{B/k}$,
$\omega\mapsto F_*\zeta(\omega)$,
is the split injection associated with $\Phi$ in Proposition~\ref{pre:splitting}.
The continuous relative differentials on $\widetilde B$ form a free
$\widetilde B$-module with basis $dz,dt$.
The reduction of $d\Phi(\widetilde v)$ is $d(v^p)=0$, so its division
by $p$ is defined as above. Replacing $\widetilde v$ by
$\widetilde v+p\widetilde w$ changes the divided differential by
$d\Phi(\widetilde w)\bmod p=d(w^p)=0$; hence the definition is independent
of the lifting. The product rule shows that
\[
 \zeta(a\omega)=a^p\zeta(\omega)
 \qquad(a\in B,\ \omega\in\Omega^1_{B/k}).
\]
In particular, $\zeta$ extends to any localization of $B$ by
$\zeta(a^{-1}\omega)=a^{-p}\zeta(\omega)$.

Differentiating \eqref{eq:deltaone-preserved-equation}, dividing by $p$, and reducing
modulo $p$ gives
\begin{equation}\label{ca:differential-compatibility}
 \zeta(dg)=g^{p-1}dg+g^pdc.
\end{equation}
In fact,
\[
 d\bigl((G')^p(1+p\widetilde c)\bigr)
   =p(G')^{p-1}dG'+p(G')^p d\widetilde c,
\]
because every omitted term is divisible by $p^2$.

Let $h$ be an irreducible factor of $g$ with multiplicity $n$ not
divisible by $p$. In $B_{(h)}$ we may write $g=vh^n$, where $v$ is a unit.
Using the Frobenius semilinearity of $\zeta$ in
\eqref{ca:differential-compatibility} and dividing by $h^{p(n-1)}$, we obtain
\[
 h^p\zeta(dv)+nv^p\zeta(dh)
   =h^pv^{p-1}dv+nv^ph^{p-1}dh+h^pv^pdc.
\]
Since $nv^p$ is a unit, it follows that
\begin{equation}\label{ca:normal-direction}
 \zeta(dh)\equiv h^{p-1}dh
 \pmod{h^p\bigl(\Omega^1_{B/k}\otimes_B B_{(h)}\bigr)}.
\end{equation}

Let $M$ be the matrix whose columns are the coefficients of
$\zeta(dz)$ and $\zeta(dt)$ in the basis $dz,dt$.
By Lemma~\ref{ca:primitive}, we can choose a basis of
$\Omega^1_{B/k}\otimes_B B_{(h)}$ whose first element is $dh$.
In this basis, \eqref{ca:normal-direction} implies that the first column
of the matrix of $\zeta$ is divisible by $h^{p-1}$. Its determinant is
therefore divisible by $h^{p-1}$.
To compare this with $M$, let $A\in\operatorname{GL}_2(B_{(h)})$ be the
basis-change matrix. The new matrix of the semilinear map is
$A^{-1}MA^{[p]}$, where $A^{[p]}$ denotes entrywise $p$-th power. Its
determinant is
\[
 (\det A)^{p-1}\det M.
\]
The additional factor is a unit. Thus $h^{p-1}$ divides $\det M$ in
$B_{(h)}$. Applying this to every factor in \eqref{ca:reduced-divisor}
and using unique factorization in $B$, we obtain
\begin{equation}\label{ca:determinant-divisibility}
 D_g^{p-1}\mid\det M\quad\text{in }B.
\end{equation}

It remains to show that $\det M\notin(z^p,t^p)$.
Write
$\Phi(z)=z^p+p\widetilde a$ and
$\Phi(t)=t^p+p\widetilde b$, and let $a,b$ be their reductions.
Then
\[
 \zeta(dz)=z^{p-1}dz+da,\qquad
 \zeta(dt)=t^{p-1}dt+db.
\]
Both forms are closed and have Cartier images $dz$ and $dt$.
By the multiplicativity of the Cartier operator,
\begin{equation}\label{ca:cartier-determinant}
 C\bigl((\det M)dz\wedge dt\bigr)=dz\wedge dt.
\end{equation}
% Explicitly, if
% \[
%  \operatorname{Tr}_B\left(\sum_{i,j\geq0}a_{ij}z^it^j\right)
%  =\sum_{r,s\geq0}a_{pr+p-1,\,ps+p-1}^{1/p}z^rt^s,
% \]
% then \eqref{ca:cartier-determinant} means
% $\operatorname{Tr}_B(\det M)=1$. In particular, the coefficient of
% $z^{p-1}t^{p-1}$ in $\det M$ is $1$.
We write $\operatorname{Tr}_B$ for the additive map defined by
\[
\operatorname{Tr}_B
\left(\sum_{i,j\geq0}a_{ij}z^it^j\right)
=
\sum_{r,s\geq0}
a_{pr+p-1,\,ps+p-1}^{1/p}z^rt^s.
\]
The explicit formula for the Cartier operator gives
\[
C(F\,dz\wedge dt)
=\operatorname{Tr}_B(F)\,dz\wedge dt
\qquad(F\in B).
\]
Thus \eqref{ca:cartier-determinant} gives $\operatorname{Tr}_B(\det M)=1$.
In particular, the coefficient of $z^{p-1}t^{p-1}$ in
$\det M$ is $1$.
This coefficient can also be checked directly: the difference
\[
 \zeta(dz)\wedge\zeta(dt)-z^{p-1}t^{p-1}dz\wedge dt
 =d\bigl(at^{p-1}dt-bz^{p-1}dz+a\,db\bigr)
\]
is exact, and an exact two-form has zero coefficient at
$z^{p-1}t^{p-1}dz\wedge dt$: a monomial contributing to this coefficient
before differentiation would have exponent $p$ in the variable being
differentiated.
Consequently, $\det M\notin(z^p,t^p)$.
Together with \eqref{ca:determinant-divisibility}, this proves
\eqref{ca:fedder-condition}.
\end{proof}

\begin{lem}\label{ca:reduced-curve}
Let $D\in k[[z,t]]$ be a nonzero reduced element. If
$D^{p-1}\notin(z^p,t^p)$, then $V(D)$ is empty, smooth, or a node.
\end{lem}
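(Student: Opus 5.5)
We argue by the multiplicity of $D$ at the origin. If $D$ is a unit, $V(D)$ is empty. If $\ord(D)=1$, $V(D)$ is smooth. It therefore suffices to treat $m=\ord(D)\ge2$, where we want to show that $m=2$ with a nondegenerate tangent cone.

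The main tool is a degree count. The ideal $(z^p,t^p)$ contains every monomial $z^it^j$ with $i+j\ge 2p-1$, since then $i\ge p$ or $j\ge p$. Hence $D^{p-1}\notin(z^p,t^p)$ forces $D^{p-1}$ to have a nonzero term of total degree at most $2p-2$. But $\ord(D^{p-1})=m(p-1)$, so $m(p-1)\le 2p-2$, that is, $m\le2$.

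Now let $m=2$ and write $D=D_2+D_{\ge3}$ with $D_2$ the quadratic part. Every monomial of $D^{p-1}$ has degree at least $2p-2$. The only monomials of degree $2p-2$ outside $(z^p,t^p)$ are those $z^it^j$ with $i,j\le p-1$ and $i+j=2p-2$, so the only candidate is $z^{p-1}t^{p-1}$. Consequently the hypothesis is equivalent to saying that the coefficient of $z^{p-1}t^{p-1}$ in $D_2^{p-1}$ is nonzero; the higher-order terms $D_{\ge3}$ only contribute monomials of degree at least $2p-1$, which lie in $(z^p,t^p)$.

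Over the algebraically closed field $k$, $D_2$ either is a product of two independent linear forms or is $\ell^2$ for a linear form $\ell$. The proof now reduces to the following two computations, which are the main step.

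\emph{Degenerate case.} If $D_2=\ell^2$, then $D_2^{p-1}=\ell^{2p-2}$. Write $\ell=\alpha z+\beta t$. The coefficient of $z^{p-1}t^{p-1}$ in $\ell^{2p-2}$ is $\binom{2p-2}{p-1}\alpha^{p-1}\beta^{p-1}$. By Lucas' theorem, $\binom{2p-2}{p-1}\equiv0\pmod p$: in base $p$, $2p-2=1\cdot p+(p-2)$, and the lower digit $p-2$ is smaller than $p-1$. So the coefficient vanishes, and this case is excluded. For $p=2$ this reads $\binom{2}{1}=2\equiv0$, so it is covered as well.

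\emph{Nondegenerate case.} If $D_2$ is a product of two independent linear forms, then after a linear change $D_2=zt$, and $D_2^{p-1}=z^{p-1}t^{p-1}$ has coefficient $1$. This case is consistent with the hypothesis. One point needs care here: a linear change of coordinates preserves the ideal $(z^p,t^p)$, because $(\alpha z+\beta t)^p=\alpha^pz^p+\beta^pt^p$. So the condition is coordinate-independent, and we may in fact normalize coordinates before running the degree count above.

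Finally, a reduced plane curve of order $2$ whose tangent cone consists of two distinct lines is a node. Concretely, $D$ is formally equivalent to $zt$: one can argue by Hensel factorization of the tangent cone, or by a degree-by-degree argument like the one in Lemma~\ref{iso:formal-morse}, since the polar form of $zt$ is nondegenerate. This completes the plan.
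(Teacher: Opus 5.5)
Your proposal is correct and follows essentially the same route as the paper: a degree count forces $\operatorname{ord}(D)=2$, the double-line tangent cone is excluded, and Lemma~\ref{iso:formal-morse} turns a tangent cone of two distinct lines into a node. The only difference is in excluding the double line: you show $\binom{2p-2}{p-1}\equiv 0 \pmod p$ by Lucas' theorem in the given coordinates, whereas the paper uses the invariance of $(z^p,t^p)$ under changes of regular parameters to reduce to $D=z^2+D_{\ge3}$, whose degree-$(2p-2)$ part $z^{2p-2}$ already lies in $(z^p,t^p)$.
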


\begin{proof}
There is nothing to prove if $D$ is a unit. If
$\operatorname{ord}(D)=1$, then $D$ is part of a regular system of
parameters, so $V(D)$ is smooth. Suppose that
$m=\operatorname{ord}(D)\geq2$.
Every monomial of total degree greater than $2(p-1)$ belongs to
$(z^p,t^p)$. Thus $m\geq3$ would imply
$D^{p-1}\in(z^p,t^p)$, a contradiction. We must have $m=2$.

The ideal $(z^p,t^p)$ is the Frobenius power $(z,t)^{[p]}$, and hence
is unchanged by a change of regular parameters. Multiplying $D$ by
a unit also does not affect the membership of $D^{p-1}$ in this ideal.
If the quadratic part of $D$ were a double line, we could therefore
assume
\[
 D=z^2+D_{\geq3}.
\]
The lowest-degree term of $D^{p-1}$ would then be $z^{2p-2}$, which
belongs to $(z^p,t^p)$. Every remaining term has
total degree greater than $2(p-1)$ and belongs to the same ideal.
This is again a contradiction.

Since $k$ is algebraically closed, the quadratic part of $D$
is a product of two distinct linear forms.
After a linear change of coordinates and multiplication by
a nonzero constant, it is $zt$.
Lemma~\ref{iso:formal-morse} then gives formal coordinates in which $D=zt$,
so $V(D)$ is a node.
% Since $k$ is algebraically closed, the quadratic part of $D$ is
% therefore a product of two distinct linear forms. After a linear
% change of variables and multiplication by a nonzero constant, it is
% $zt$. 
% We explain why the higher-order terms can be removed formally.
% Suppose that the current expression is
% $D=zt+D_N+D_{\geq N+1}$, where $N\geq3$ and $D_N$ is homogeneous of
% degree $N$. Write
% \[
%  D_N=zA_{N-1}+tB_{N-1}
% \]
% with $A_{N-1},B_{N-1}$ homogeneous of degree $N-1$.
% The coordinate change
% $z\mapsto z-B_{N-1}$, $t\mapsto t-A_{N-1}$ cancels $D_N$.
% All new error terms have degree at least $2N-2\geq N+1$.
% Iterating and passing to the $(z,t)$-adic limit, we obtain formal
% coordinates in which $D=zt$, so $V(D)$ is a node.
\end{proof}
%\commentbox{taking limit tokani suru?}

\begin{proof}[Proof of Theorem~\ref{thm:main-ca}]
Suppose first that $R_g$ is $F$-liftable. Lemma~\ref{ca:determinant}
gives $D_g^{p-1}\notin(z^p,t^p)$, and Lemma~\ref{ca:reduced-curve}
shows that $V(D_g)$ is empty, smooth, or a node. This proves
$(1)\Rightarrow(3)$.

Assume (3). Choose regular parameters so that the branches occurring
in $D_g$, if any, are coordinate axes. Write each multiplicity in the
factorization of $g$ as $n_i=pq_i+r_i$, where $0\leq r_i<p$.
Every factor with $r_i\neq0$ is associated to $z$ or $t$.
Absorbing the corresponding units into $u$, and setting
$H=\prod_i h_i^{q_i}$, we obtain
\[
 g=uH^pz^at^b,\qquad u\in B^\times,\quad
 0\neq H\in B,\quad 0\leq a,b<p.
\]
This proves $(3)\Rightarrow(2)$. Conversely, in such an expression
every irreducible factor other than $z$ and $t$ has multiplicity
divisible by $p$. The multiplicities of $z$ and $t$ modulo $p$ are
$a$ and $b$, respectively. Hence $V(D_g)$ is a union of a subset of
the two coordinate axes, proving $(2)\Rightarrow(3)$.

Finally, assume (2). With $x'=u^{-1}x$, the equation
$xy-uH^pz^at^b$ becomes $u(x'y-H^pz^at^b)$.
Thus $R_g\cong R_{H^pz^at^b}$, and we may assume $u=1$.
Choose any lifting $\widetilde H\in W_2(k)[[z,t]]$ of $H$ and put
\[
 G=\widetilde H^{\,p}z^at^b.
\]
Since $\Phi_0(\widetilde H)\equiv\widetilde H^{\,p}\pmod p$,
there is an element $A\in\widetilde B$ such that
$\Phi_0(\widetilde H)=\widetilde H^{\,p}+pA$. The identity
$(U+pV)^p=U^p$ in a ring in which $p^2=0$ gives
\[
 \Phi_0(\widetilde H)^p=\widetilde H^{\,p^2},
 \qquad \Phi_0(G)=G^p.
\]
Thus the standard Frobenius lifting on
$W_2(k)[[x,y,z,t]]$ preserves the ideal $(xy-G)$, because
\[
 \Phi_0(xy-G)=(xy)^p-G^p
  =(xy-G)\sum_{i=0}^{p-1}(xy)^{p-1-i}G^i.
\]
It descends to a Frobenius lifting on the flat ring
$W_2(k)[[x,y,z,t]]/(xy-G)$. This proves $(2)\Rightarrow(1)$.
\end{proof}

{
\noindent\textbf{Use of generative AI}

Much of the mathematical content of this paper, including the
statements and proofs of Theorems~\ref{thm:main-ici}
and~\ref{thm:main-ca} and key intermediate results such as
Proposition~\ref{iso:codimension}, was obtained through extensive
interactions with OpenAI's ChatGPT (GPT-6 Astra).
ChatGPT was also used to refine the wording and improve the clarity
of the exposition.
The authors revised and incorporated this material into the
manuscript and take full responsibility for the final
mathematical content and presentation.

}

\section*{Acknowledgements}
Kawakami was supported by JSPS KAKENHI Grant number JP24K16897 and by Inamori Foundation.
Takamatsu was supported by JSPS KAKENHI Grant number JP25K17228.

\bibliographystyle{alpha}
\bibliography{Kawakami-Takamatsu}
\end{document}